\definecolor{labelkey}{gray}{.8}
\definecolor{refkey}{gray}{.8}
\definecolor{darkred}{rgb}{0.9,0.1,0.1}
 \newcounter{enunciato}[section]
 \newtheorem{ittheorem}{Theorem}
 \newtheorem{itlemma}{Lemma}
 \newtheorem{itproposition}{Proposition}
 \newtheorem{itcorollary}{Corollary}
 \newtheorem{itdefinition}{Definition}
 \newtheorem{itremark}{Remark}
 \newtheorem{itclaim}{Claim}
 \newtheorem{itfact}{Fact}
 \newtheorem{itconjecture}{Conjecture}
 \newenvironment{theorem}{\addtocounter{enunciato}{1}
 \begin{ittheorem}}{\end{ittheorem}}
 \newenvironment{lemma}{\addtocounter{enunciato}{1}
 \begin{itlemma}}{\end{itlemma}}
 \newenvironment{proposition}{\addtocounter{enunciato}{1}
 \begin{itproposition}}{\end{itproposition}}
 \newenvironment{corollary}{\addtocounter{enunciato}{1}
 \begin{itcorollary}}{\end{itcorollary}}
 \newenvironment{definition}{\addtocounter{enunciato}{1}
 \begin{itdefinition}}{\end{itdefinition}}
 \newenvironment{remark}{\addtocounter{enunciato}{1}
 \begin{itremark}}{\end{itremark}}
 \newenvironment{claim}{\addtocounter{enunciato}{1}
 \begin{itclaim}}{\end{itclaim}}
 \newenvironment{fact}{\addtocounter{enunciato}{1}
 \begin{itfact}}{\end{itfact}}
 \newenvironment{conjecture}{\addtocounter{enunciato}{1}
 \begin{itconjecture}}{\end{itconjecture}}
 \newcommand{\be}[1]{\begin{equation}\label{#1}}
 \newcommand{\ee}{\end{equation}}
 \newcommand{\bl}[1]{\begin{lemma}\label{#1}}
 \newcommand{\el}{\end{lemma}}
 \newcommand{\br}[1]{\begin{remark}\label{#1}}
 \newcommand{\er}{\end{remark}}
 \newcommand{\bt}[1]{\begin{theorem}\label{#1}}
 \newcommand{\et}{\end{theorem}}
 \newcommand{\bd}[1]{\begin{definition}\label{#1}}
 \newcommand{\ed}{\end{definition}}
 \newcommand{\bcl}[1]{\begin{claim}\label{#1}}
 \newcommand{\ecl}{\end{claim}}
 \newcommand{\bfact}[1]{\begin{fact}\label{#1}}
 \newcommand{\efact}{\end{fact}}
 \newcommand{\bp}[1]{\begin{proposition}\label{#1}}
 \newcommand{\ep}{\end{proposition}}
 \newcommand{\bc}[1]{\begin{corollary}\label{#1}}
 \newcommand{\ec}{\end{corollary}}
 \newcommand{\bcj}[1]{\begin{conjecture}\label{#1}}
 \newcommand{\ecj}{\end{conjecture}}
 \newcommand{\bpr}{\begin{proof}}
 \newcommand{\epr}{\end{proof}}
 \newcommand{\bprlem}[1]{\begin{proofof}{\it Lemma \ref{#1}}.\,\,}
 \newcommand{\eprlem}{\end{proofof}}
 \newcommand{\bprthm}[1]{\begin{proofof}{\it Theorem \ref{#1}}.\,\,}
 \newcommand{\eprthm}{\end{proofof}}
 \newcommand{\bprprop}[1]{\begin{proofof}{\it Proposition \ref{#1}}.\,\,}
 \newcommand{\eprprop}{\end{proofof}}
 \newcommand{\bi}{\begin{itemize}}
 \newcommand{\ei}{\end{itemize}}
 \newcommand{\ben}{\begin{enumerate}}
 \newcommand{\een}{\end{enumerate}}
 \newenvironment{proof}{\noindent {\em Proof}.\,\,}{\hspace*{\fill}$\halmos$\medskip}
 \newenvironment{proofof}{\noindent {\em Proof of\,\,}}{\hspace*{\fill}$\halmos$\medskip}
 \newcommand{\halmos}{\rule{1ex}{1.4ex}}
 \newcommand{\one}{{\mathchoice {1\mskip-4mu\mathrm l}
         {1\mskip-4mu\mathrm l}
         {1\mskip-4.5mu\mathrm l}
         {1\mskip-5mu\mathrm l}}}
\def \E {{\mathbb E}}
\def \N {{\mathbb N}}
\def \P {{\mathbb P}}
\def \R {{\mathbb R}}
\def \Z {{\mathbb Z}}
\def \lra \leftrightarrow
\def \ra {\rightarrow}
\def \ba {\begin{array}}
\def \ea {\end{array}}
\def \lra {\longrightarrow}
\def \cI {{\mathcal I}}
\def \cJ {{\mathcal J}}
\def \lra {{\leftrightarrow}}
\def \Ll {\left}
\def \Rr {\right}
\def \subset {\subseteq}
\def \emptyset {\varnothing}
\def\one{\rlap{\mbox{\small\rm 1}}\kern.15em 1}
\newlength{\dhatheight}
\begin{document}
\title{Extinction time for the contact process on general graphs}

\author{Bruno Schapira\textsuperscript{1}, Daniel Valesin\textsuperscript{2}}
\footnotetext[1]{Aix-Marseille Universit\'e, CNRS, Centrale Marseille, I2M, UMR 7373, 13453 Marseille, France.\\ \url{bruno.schapira@univ-amu.fr}}
\footnotetext[2]{\noindent University of Groningen, Nijenborgh 9, 9747 AG Groningen, The Netherlands.\\ \url{d.rodrigues.valesin@rug.nl}}
\date{September 10, 2015}
\maketitle

\begin{abstract}
We consider the contact process on finite and connected graphs and study the behavior of the extinction time, that is, the amount of time that it takes for the infection to disappear in the process started from full occupancy. We prove, without any restriction on the graph $G$, that if the infection rate $\lambda$ is larger than the critical rate of the one-dimensional process, then the extinction time grows faster than $\exp\{|G|/(\log|G|)^\kappa\}$ for any constant $\kappa > 1$, where $|G|$ denotes the number of vertices of $G$. Also for general graphs, we show that the extinction time divided by its expectation converges in distribution, as the number of vertices tends to infinity, to the exponential distribution with parameter 1. These results complement earlier work of Mountford, Mourrat, Valesin and Yao, in which only graphs of bounded degrees were considered, and the extinction time was shown to grow exponentially in $n$; here we also provide a simpler proof of this fact.\end{abstract}

\section{Introduction}
The \textit{contact process} $(\xi_t)_{t \geq 0}$ with \textit{infection rate} $\lambda$ on a graph $G = (V,E)$ is the Markov process on the space $\{0,1\}^V$ and generator given, for any cylindrical function $f$, by
\begin{equation}\label{eq:generator}\mathcal{L}f(\xi) = \sum_{x \in V:\xi(x) =1} \left( (f(\xi^{x\leftarrow 0}) - f(\xi)) + \lambda\sum_{y \in V: y \sim x}(f(\xi^{y \leftarrow 1}) - f(\xi))\right),\end{equation}
where $y \sim x$ means that $x$ and $y$ are neighbors and $\xi^{z \leftarrow i}$, for $z \in V$ and $i \in \{0,1\}$, is the configuration defined by $\xi^{z \leftarrow i}(z) = i$ and $\xi^{z \leftarrow i}(x) = \xi(x)$ for any $x \neq z$. Vertices of the graph are interpreted as individuals in a population; each individual can be healthy (state 0) or infected (state 1). The above generator prescribes that infected individuals become healthy with rate 1 and transmit the infection to each neighbor with rate $\lambda$.

We denote by $\underline{0}$ and $\underline{1}$ the elements of $\{0,1\}^V$ that are identically equal to 0 and 1, respectively. Inspecting the above generator shows that $\underline{0}$ is an absorbing state for the dynamics. Let $x \in V$ and $A \subset V$; we denote by $(\xi^x_t)$, $(\xi^A_t)$ and $(\xi^{\underline{1}}_t)$ the process started from $\mathds{1}_{\{x\}}$, $\mathds{1}_A$ and $\underline{1}$, respectively ($\mathds{1}$ is the indicator function). We also denote by $\P_\lambda$ a probability measure under which the contact process with rate $\lambda$ is defined on the graph $G$ (which will be clear from the context, as will the initial configuration of the process); later we will fix $\lambda$ and omit it from the notation as well. We denote by $\E_\lambda$, or sometimes simply $\E$, the associated expectation.

In \cite{lig2}, the reader can find a thorough introduction to the contact process. For the sake of the remainder of this introduction, let us say a few words about its \textit{phase transition}, starting with the case $G = \Z^d$, the $d$-dimensional integer lattice. Define the following \textit{survival events}:
\begin{equation*} \label{eq:events}S_\text{global} := \{\xi^0_t \neq \underline{0} \text{ for all }t\}\supseteq  \{\text{for all } t_0 \text{ there exists } t_1 > t_0: \xi^0_{t_1}(0) = 1\} =:S_{\text{local}}.\end{equation*}
Then, there exists $\lambda_c = \lambda_c(\Z^d) > 0$ so that: if $\lambda \leq \lambda_c$, then $\P_\lambda[S_\text{global}] = 0$ and if $\lambda > \lambda_c$, then $\P_\lambda\Ll[S_\text{global}\Rr] > 0$ and $\P_\lambda\Ll[S_\text{local}\mid S_\text{global}\Rr] = 1$. Now take $G = \mathbb{T}^d$, the infinite regular tree of degree $d\geq 3$, fix a root vertex and denote it by $0$, and take the same survival events as defined above. Then, there exist $\lambda_c^{(1)}= \lambda_c^{(1)}(\mathbb{T}^d)$, $\lambda_c^{(2)} = \lambda_c^{(2)}(\mathbb{T}^d)$ so that $0 <\lambda_c^{(1)}< \lambda_c^{(2)} < \infty$ and: if $\lambda \leq \lambda_c^{(1)}$, then $\P_\lambda\Ll[S_\text{global}\Rr] = 0$; if $\lambda_c^{(1)} < \lambda \leq\lambda_c^{(2)}$, then $\P_\lambda\Ll[S_\text{global}\Rr] >0$ and  $\P_\lambda\Ll[S_\text{local}\Rr] = 0$; if $\lambda > \lambda_c^{(2)}$, then $\P_\lambda\Ll[S_\text{global}\Rr] >0$ and  $\P_\lambda\Ll[S_\text{local}\mid S_\text{global}\Rr] = 1$.

In case $G$ is a finite graph, we have $\P_\lambda[S_\text{global}] = \P_\lambda[S_\text{local}] = 0$, since the process is then a continuous-time Markov chain with a finite state space and the trap $\underline{0}$ can be reached from any other configuration; in particular the \textit{extinction time}
$$\uptau_G = \inf\{t: \xi^{\underline{1}}_t = \underline{0}\}$$
is necessarily finite. Hence, on finite graphs there can be no phase transition in the sense presented in the previous paragraph. Still, one can study the dependence of the process on the value of $\lambda$, and in some cases make sense of a finite-volume phase transition. This project typically goes as follows: one fixes $\lambda > 0$ and some sequence of graphs $(G_n)_{n \geq 1}$ (usually converging or increasing, in some sense, to an infinite graph, or belonging to some class of random graphs), and then studies the asymptotic behavior of the random variables $\uptau_{G_n}$, including their dependence on $\lambda$. This has been carried out in the case of boxes of $\Z^d$ (\cite{CGOV84}, \cite{sc85}, \cite{DL88}, \cite{Ch94}, \cite{DS88}, \cite{mo93}, \cite{mo99}), finite homogeneous trees (\cite{St01}, \cite{CMMV13}), the configuration model (\cite{CD09}, \cite{mvy13}, \cite{haobruno}, \cite{mv14}, \cite{LS15}) and the preferential attachment graph (\cite{BBCS05}, \cite{hao}).

These are successful case studies, but they of course depend on exploring the structure of the graphs under consideration and sometimes their relation to some infinite (possibly random) graph. In contrast, one may  wonder if there are results that are context-free, that is, that hold for \textit{arbitrary} sequences of graphs. Indeed, the following facts have been established. Given a graph $G$, let $|G|$ denote its number of vertices.
\begin{theorem}\label{thm:ext_bound}
\begin{itemize}
\item[(i)] \cite{mv14} For any $d \in \N$ and $\lambda < \lambda_c^{(1)}(\mathbb{T}^d)$ there exists $C > 0$ such that, for any graph $G$ with degree bounded by $d$ and at least two vertices,
$$\E_\lambda[\uptau_G] \leq C\log(|G|).$$
\item[(ii)]\cite{mmvy13} For any $d \in \N$ and $\lambda > \lambda_c(\Z)$ there exists $c > 0$ such that, for any connected graph $G$ with degree bounded by $d$ and at least two vertices,
$$\E_\lambda[\uptau_G] \geq \exp\{c|G|\}.$$
\end{itemize}
\end{theorem}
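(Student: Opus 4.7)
For part (i), the plan is to combine exponential decay in the subcritical tree regime with a union bound over starting vertices. Fix $\lambda < \lambda_c^{(1)}(\mathbb{T}^d)$; in the global-subcritical regime on $\mathbb{T}^d$ one has $\P_\lambda[\tilde\xi_t^{\tilde x} \neq \emptyset] \leq Ce^{-\gamma t}$ for some $\gamma, C > 0$, a classical consequence of subcriticality on the tree. A standard graphical-representation coupling, lifting to the universal cover of $G$ (which is a subtree of $\mathbb{T}^d$), transfers this bound to $G$, giving $\P_\lambda[\xi_t^x \neq \emptyset] \leq Ce^{-\gamma t}$ for each $x \in V$. By additivity of the contact process, $\P_\lambda[\uptau_G > t] = \P_\lambda[\xi_t^{\underline 1} \neq \emptyset] \leq \sum_{x \in V} \P_\lambda[\xi_t^x \neq \emptyset] \leq C|G|e^{-\gamma t}$; integrating in $t$ then yields $\E_\lambda[\uptau_G] = O(\log|G|)$.

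For part (ii), the plan is a renormalisation construction that reduces the problem to supercritical oriented percolation on a cycle. Fix $\lambda > \lambda_c(\Z)$ and invoke the Bezuidenhout--Grimmett block construction for the one-dimensional supercritical contact process to select integers $L, T$ (possibly large) such that the following event has probability $\geq 1-\epsilon$: on a path of $3L$ sites, starting with at least $L/2$ infected in the middle third, at time $T$ at least $L/2$ sites are infected in each of the two outer thirds. To import this into an arbitrary connected $G$ of maximum degree $d$, select a spanning tree of $G$ and perform a depth-first traversal, producing a closed walk $(w_0, \ldots, w_{2(|G|-1)})$ which visits every vertex of $G$. Partition this walk cyclically into $\Theta(|G|/L)$ overlapping arcs of $3L$ consecutive positions, each of which induces a connected subgraph of $G$. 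With high probability, the 1d block event holds within each arc, so the successful arcs dominate a supercritical oriented site percolation on a cycle of length $\Theta(|G|/L)$, whose extinction time starting fully occupied is $\exp(c|G|)$ in expectation, giving the bound $\E_\lambda[\uptau_G] \geq \exp(c|G|)$.

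The main obstacle is that the DFS walk revisits vertices at branch points (each vertex at most $O(d)$ times), so block events in distinct arcs are not independent. We address this by taking $L$ large enough that each block event has probability $\geq 1-\epsilon_d$ with $\epsilon_d$ small enough to dominate a supercritical $O(d)$-dependent oriented percolation on the cycle, via a Liggett--Schonmann--Stacey type domination. The final conclusion then follows by noting that as long as the dominated oriented percolation still occupies a positive fraction of blocks at time $t$, the contact process on $G$ retains $\Omega(|G|)$ infected sites, and in particular $\uptau_G > t$; the known survival estimates for supercritical oriented percolation on a cycle of size $N$ then give survival for time $\exp(cN)$ with probability bounded away from zero, which translates to $\E_\lambda[\uptau_G] \geq \exp(c'|G|)$.
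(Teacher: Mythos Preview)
For part (i) the paper gives no proof and simply cites \cite{mv14}, so there is nothing to compare against. Your sketch is in the right spirit, but the ``lift to the universal cover'' step is not the trivial coupling you suggest: if you pull the Harris system on $G$ back along the covering map, the clocks on $\tilde G$ are not independent, so you do not directly get $\P_G[\xi_t^x\neq\underline0]\le\P_{\tilde G}[\tilde\xi_t^{\tilde x}\neq\underline0]$. What actually works (and is what \cite{mv14} does) is the first-moment route $\P_G[\xi_t^x\neq\underline0]\le\E_G[|\xi_t^x|]$, together with exponential decay of $\E_{\mathbb T^d}[|\xi_t^0|]$ in the regime $\lambda<\lambda_c^{(1)}(\mathbb T^d)$; the first-moment quantity \emph{does} transfer through the cover.

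For part (ii) your route is entirely different from the paper's. The paper uses no block/percolation comparison. It sets $\alpha_r=\inf_{|G|\le 2^r}(\log\E[\uptau_G])/|G|$ and proves the recursion $\alpha_{r+1}\ge\alpha_r-C r\,2^{-r}$ by (a) splitting any bounded-degree tree of size $\le 2^{r+1}$ into at most $d+1$ subtrees of size $\le 2^r$ via Lemma~\ref{lem:split}, and (b) applying the product inequality of Proposition~\ref{lem:induction}, which itself rests on the coupling Proposition~\ref{prop:prelim_coup}. A preliminary stretched-exponential bound (Proposition~\ref{prop:lev2}) gives $\alpha_r\ge c\,2^{-2r/3}$, and summing the recursion from a large $r_0$ yields $\inf_r\alpha_r>0$.

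Your DFS-walk argument has a genuine gap, and it is not the dependence \emph{between} arcs that you flag. An arc of $3L$ consecutive positions in the DFS walk is not a path in $G$: its image is a subtree, and the map from positions to vertices is many-to-one (each vertex up to $d$ times). Hence ``the 1d block event holds within each arc'' is not well-defined. If you try to realise the one-dimensional process on the abstract segment $\{0,\dots,3L-1\}$ by pulling back the Harris system of $G$ (recovery at position $j$ given by $D^{w_{i+j}}$, transmission $j\to j\pm1$ by $D^{(w_{i+j},w_{i+j\pm1})}$), then positions mapping to the same vertex recover \emph{simultaneously}, and edges traversed twice give correlated transmissions. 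The process you obtain on the segment is therefore not the standard supercritical one-dimensional contact process for which the Bezuidenhout--Grimmett block estimate is proved, and there is no monotone coupling in either direction (correlated recoveries can only be compensated by raising the recovery rate, which would force $\lambda>d\,\lambda_c(\Z)$, not merely $\lambda>\lambda_c(\Z)$). Your Liggett--Schonmann--Stacey step handles dependence across arcs but does nothing for this failure \emph{within} a single arc. To repair the argument you would essentially need a block estimate for the contact process on an arbitrary bounded-degree subtree of size $3L$, which is close to the statement you are trying to prove.
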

Our motivation in this paper is to improve the second part of Theorem \ref{thm:ext_bound}. With the generality that the result is stated, the restriction that $\lambda > \lambda_c(\Z)$ cannot be relaxed:  the class of graphs under consideration includes line segments of $\Z$ and for those, the extinction time grows logarithmically with the number of vertices when $\lambda < \lambda_c(\Z)$.  In contrast, the requirement that the degree be bounded seems unnecessary: if vertices of larger and larger degree are present, this should only contribute to the extinction time being larger. The reason this requirement was present in \cite{mmvy13} was a technical convenience: it allowed for the application of a certain lemma concerning the splitting of trees into large subtrees (this lemma is reproduced here: see Lemma \ref{lem:split} below). Our main result is:
\begin{theorem}\label{thm:ext}
For any $\lambda > \lambda_c(\Z)$ and any $\varepsilon>0$, there exists a constant $c_\varepsilon$ such that for any connected graph $G$ with at least two vertices,
\begin{equation}\label{eq:main_thm}\mathbb{E}_\lambda[\uptau_G] \ge  \exp\left\{c_\varepsilon\,  \frac {|G|}{(\log |G|)^{1+\varepsilon}}\right\}\end{equation}
and, for any non-empty $A \subset G$,
\begin{equation}
\P\left[\xi^A_{\exp\left\{c_\varepsilon |G|/(\log |G|)^{1+\varepsilon}\right\}} \neq \underline{0}\right] > c_\varepsilon.\label{eq:main_thm2}
\end{equation}
\end{theorem}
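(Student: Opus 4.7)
My approach combines graphical monotonicity, a dichotomy on the maximum degree of a spanning tree of $G$, and known extinction-time bounds on stars and on bounded-degree graphs. By the standard graphical construction, the contact process on $G$ stochastically dominates the one on any spanning subgraph, and in particular $\uptau_H \preceq \uptau_G$ for any such $H$, so it suffices to exhibit a convenient $H$. Passing to a spanning tree $T$ of $G$, set $n = |G|$ and fix a threshold $D_n$ of order $n/(\log n)^{\varepsilon}$.

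If $T$ contains a vertex $v$ with $\deg_T(v) \geq D_n$, the star centered at $v$ with $D_n$ of its neighbors is a subgraph of $T$, and the classical bound on stars---that the contact process on a star with $k$ leaves has expected extinction time at least $\exp(c\lambda^2 k/\log k)$ for every $\lambda > 0$ and $k$ large---directly yields $\E_\lambda[\uptau_T] \geq \exp(c' n/(\log n)^{1+\varepsilon})$. Otherwise, every vertex of $T$ has degree strictly less than $D_n$, and the plan is to extract a subtree $T' \subset T$ with bounded absolute maximum degree (say at most $3$) and $|T'| \geq n/(\log n)^{1+\varepsilon}$, then invoke Theorem \ref{thm:ext_bound}(ii) (or the self-contained reproof that the abstract promises) to conclude $\E_\lambda[\uptau_{T'}] \geq \exp(c|T'|) \geq \exp(c n/(\log n)^{1+\varepsilon})$.

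The main obstacle is precisely this extraction. A naive greedy pruning (keeping at most $3$ children at each vertex of $T$) can lose a factor as large as the maximum degree itself, which is too much when that degree is polynomial in $n$ (think of a ``fat'' balanced tree of degree $n^{1/2}$, where no bounded-degree subtree has more than a constant number of vertices). Controlling the loss to only a $(\log n)^{1+\varepsilon}$ factor will require a more refined procedure: I expect a weighted or recursive pruning in the spirit of the tree-splitting Lemma \ref{lem:split}, probably fusing the two sides of the dichotomy by charging each removed ``moderately heavy'' vertex a star contribution and playing this off against the bounded-degree survival estimate. Designing such an adaptation of the tree-splitting lemma to the unbounded-degree setting is, in my view, the essential new combinatorial input of the proof.

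For \eqref{eq:main_thm2}, a standard spread-and-restart argument should suffice: since $G$ is connected, the process started from any non-empty $A$ reaches $\underline{1}$ (or some fixed large ``good'' sub-configuration) within polynomial time in $n$ with probability bounded below by a positive constant, after which the expectation estimate \eqref{eq:main_thm} transfers to a tail probability on $\uptau$ by coupling with the process started from $\underline{1}$ and a Markov-type inequality on the extinction time.
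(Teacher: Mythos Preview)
Your dichotomy-and-extract plan has a genuine gap at exactly the place you flag. The extraction of a bounded-degree subtree $T'$ with $|T'|\ge n/(\log n)^{1+\varepsilon}$ is not merely hard to design; it is impossible in general. Your own example already shows this: if $T$ is a rooted tree in which every internal vertex has degree $d=d(n)$ with, say, $d=n/(\log n)^{\varepsilon}-1$ (so the ``else'' branch applies), then the depth of $T$ is $O(\log n/\log d)=O(1)$, and the largest subtree of maximum degree $3$ is a binary tree of that depth, hence of constant size. No pruning scheme can beat this, so reducing to Theorem~\ref{thm:ext_bound}(ii) on a single bounded-degree piece is a dead end. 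The ``fusing'' you describe --- charging star contributions from moderately heavy vertices --- is the right instinct, but it cannot be implemented as a purely combinatorial extraction: the contributions of disjoint stars (or other pieces) do not combine additively at the level of $\log\E[\uptau]$ without a probabilistic input that lets you multiply expected extinction times of subgraphs.

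That input is the coupling result (Proposition~\ref{prop:prelim_coup}) and its consequence Proposition~\ref{lem:induction}, which says $\E[\uptau_G]\gtrsim (2|G|^3)^{-(N+1)}\prod_i \E[\uptau_{G_i}]$ for any decomposition into $N$ pieces. This is what replaces your extraction step: instead of finding one good subgraph, the paper decomposes $G$ into many pieces (stars, segments, or smaller trees) and multiplies. Because the correction $(2|G|^3)^{N+1}$ blows up with $N$, one cannot use too many pieces, and this is precisely the source of the $(\log|G|)^{1+\varepsilon}$ loss. The actual argument bootstraps through several ``levels'' (polynomial $\to$ stretched exponential $\to$ $\exp\{n/(\log n)^{10}\}\to$ the final bound), each time feeding the previous bound into the product formula and doing an induction on $\lfloor\log_2|G|\rfloor$ via a recursion of the form $\beta_{r+1}\ge\min(\text{const},\,\beta_r - C/r^2)$. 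Your sketch for \eqref{eq:main_thm2} is essentially right and close to what the paper does.
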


This theorem, as well as the second part of Theorem \ref{thm:ext_bound}, imply that any sequence of graphs has a ``supercritical phase'', which contains the parameter values $\lambda\in (\lambda_c(\mathbb{Z}),\infty)$. This is certainly informative, but in many specific cases $\lambda_c(\Z)$ is not the optimal threshold; for example, if $G_n$ is given by increasing boxes of $\Z^d$ with $d$ large, then the extinction time grows exponentially if $\lambda > \lambda_c(\Z^d)$, which is smaller than $\lambda_c(\Z)$. More drastically, in some graphs with unbounded degree, such as the configuration model with power law degree distribution or the preferential attachment graph, the extinction time grows as an exponential (or at least  stretched exponential) function of $|G_n|$ for \textit{any} positive $\lambda$.

In spite of not directly giving the optimal rate in specific cases, Theorem \ref{thm:ext_bound} (ii) and Theorem \ref{thm:ext} can be useful in the process of obtaining the optimal rate. For one thing, our proof of Theorem \ref{thm:ext} is versatile in that it relies on quite useful inequalities and simple methods and could easily be adapted to other contexts (see below for a discussion of our strategy of proof). In addition, lower bounds on the extinction time often follow from some type of coarse graining or renormalization procedure in which, by partitioning space and time into large units, one obtains a new version of the process, in which a notion of infection rate can also be made precise and can often be made as large as desired. An instance of this is found in \cite{mmvy13}, where Theorem \ref{thm:ext_bound} is used in the treatment of the contact process on a graph given by the configuration model with a power law degree distribution.

We also prove:
\begin{theorem}\label{thm:couple}
For any $\lambda > \lambda_c(\Z)$ and any sequence of graphs $(G_n)_{n \in \N}$ with $|G_n| \to \infty$ as $n \to \infty$,
$$\frac{\uptau_{G_n}}{\mathbb{E}_\lambda[\uptau_{G_n}]} \xrightarrow[\text{(d.)}]{n \to \infty} \mathrm{Exp}(1).$$
\end{theorem}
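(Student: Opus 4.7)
Write $a_n := \E_\lambda[\uptau_{G_n}]$ and $F_n(t) := \P[\uptau_{G_n} > t]$; the plan is to show $F_n(t\, a_n) \to e^{-t}$ for every fixed $t > 0$ by sandwiching $F_n$ between matching approximate-multiplicative estimates on the scale $a_n$.

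The easy side is the submultiplicative upper bound. By the basic (monotone) coupling of the graphical construction, $\P[\uptau^A > t] \leq F_n(t)$ for every non-empty $A$. Applying the Markov property at time $s$,
\begin{equation*}
F_n(s+t) = \E\!\left[\one_{\{\uptau_{G_n} > s\}}\, \P\!\left[\uptau^{\xi^{\underline{1}}_s} > t\right]\right] \leq F_n(s)\,F_n(t),
\end{equation*}
so $-\log F_n$ is superadditive. Combined with $\int_0^\infty F_n = a_n$ this already forces $F_n(t a_n) \leq e^{-t(1-o(1))}$ and yields the tightness of $\uptau_{G_n}/a_n$.

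The substantive step is the matching lower bound $F_n(s+t) \geq F_n(s)F_n(t) - o_n(1)$, uniformly in $(s,t)$ on compact subsets of $a_n \cdot (0,\infty)^2$. Via the Markov decomposition this reduces to the uniform estimate $\P[\uptau^{\xi^{\underline{1}}_s} > t] \geq F_n(t) - o_n(1)$, and the cleanest route to the latter is a coupling/restart lemma: there exists $\kappa_n$ with $\kappa_n/a_n \to 0$ such that, for every non-empty $A \subseteq V(G_n)$, the basic coupling of $\xi^A$ and $\xi^{\underline{1}}$ satisfies
\begin{equation*}
\P\!\left[\xi^A_{\kappa_n} \neq \underline{0}\ \text{and}\ \xi^A_{\kappa_n} \neq \xi^{\underline{1}}_{\kappa_n}\right] \xrightarrow{n\to\infty} 0.
\end{equation*}
On the complementary event, if $\xi^A_{\kappa_n}$ has not died out it must equal $\xi^{\underline{1}}_{\kappa_n}$, and then by monotonicity the two processes agree at every later time, so $\uptau^A = \uptau^{\underline{1}}$. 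Hence $\P[\uptau^A > t] \geq F_n(t) - o_n(1)$ for every $t \geq \kappa_n$ uniformly in $A$, which plugged back into the Markov decomposition gives the desired lower bound; the convergence to $\mathrm{Exp}(1)$ then follows from the matching multiplicative estimates by a now-standard argument (cf.\ \cite{mo93, mmvy13}).

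The main obstacle is proving the coupling lemma for arbitrary $G_n$, without any spatial or spectral information on the graph. The only quantitative input is (\ref{eq:main_thm2}): $\P[\uptau^A > T_n] \geq c_\varepsilon$ for $T_n := \exp\{c_\varepsilon |G_n|/(\log|G_n|)^{1+\varepsilon}\}$, and choosing different values of $\varepsilon$ in (\ref{eq:main_thm}) and (\ref{eq:main_thm2}) supplies $a_n/T_n \to \infty$, leaving room for $\kappa_n$ with $T_n \ll \kappa_n \ll a_n$. Iterating Markov upgrades the survival constant $c_\varepsilon$ to $c_\varepsilon^k$ on time scale $kT_n$, but does not by itself control the deficit $D_t := \xi^{\underline{1}}_t \setminus \xi^A_t$. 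Under the basic coupling, $D_t$ grows only through infection events whose source lies in $D_t$ itself (infections originating in $\xi^A_t$ act identically on both processes) and shrinks at every recovery mark on $D_t$, so it is dominated by an auxiliary contact-type process. The crux is then a dichotomy: either $D_t$ is absorbed on scale $\kappa_n$ (coupling closes) or $\xi^A$ has meanwhile been extinguished, with the bad third alternative—both processes alive but distinct—having vanishing probability thanks to (\ref{eq:main_thm2}) applied, in a suitably strong form, to the auxiliary process governing $D$ and possibly combined with the duality/renormalisation tools used in the proof of Theorem \ref{thm:ext}. Making this precise uniformly over $A$, and over the arbitrary sequence $(G_n)$, is the technical heart of the argument.
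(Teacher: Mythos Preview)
Your overall architecture---submultiplicativity of $F_n$, plus a coupling lemma of the form $\P[\xi^A_{\kappa_n}\neq\underline{0},\,\xi^A_{\kappa_n}\neq\xi^{\underline{1}}_{\kappa_n}]\to 0$ with $\kappa_n=o(\E[\uptau_{G_n}])$---is exactly right and matches the paper, which invokes the abstract criterion (Lemma~A.1 of \cite{mmvy13}) reducing Theorem~\ref{thm:couple} to precisely such a coupling statement. The gap is in how you propose to prove the coupling lemma.

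Your plan is to analyse the defect $D_t=\xi^{\underline{1}}_t\setminus\xi^A_t$ and argue that it is absorbed on scale $\kappa_n$ unless $\xi^A$ has already died. But the domination of $D_t$ by an auxiliary contact-type process points the wrong way: the dominating process survives for time of order $\E[\uptau_{G_n}]$, so domination yields no extinction of $D$ on any shorter scale. Inequality~\eqref{eq:main_thm2} is a \emph{lower} bound on survival and cannot be ``applied to the auxiliary process governing $D$'' to make $D$ disappear; if anything it certifies that $D$ (which on $\{\xi^A=\underline{0}\}$ coincides with $\xi^{\underline{1}}$) persists. The actual mechanism is not that $D$ dies on its own but that $\xi^A$ actively invades $D$: infection paths from any surviving $A$ and from $\underline{1}$ coalesce inside a common subgraph. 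The paper carries this out directly in Proposition~\ref{prop:prelim_coup}: every tree on $n$ vertices contains a line segment or star $G_0$ of size at least $\sqrt{\log n}$; on time scale $n(\log n)^3$ any surviving process reaches $G_0$ and becomes lit there, and inside $G_0$ all surviving infection paths merge (via \eqref{eq:2path_line}, \eqref{eq:2path_star}). This gives $\P[\xi^A_t\neq\underline{0},\,\xi^A_t\neq\xi^{\underline{1}}_t]\leq\exp\{-c\lfloor t/(n(\log n)^3)\rfloor\}$, so any $\kappa_n$ growing faster than $n(\log n)^3$ suffices. The required separation $\kappa_n=o(\E[\uptau_{G_n}])$ then needs only the crude polynomial bound $\E[\uptau_G]\geq|G|^{12}$ of Proposition~\ref{prop:lev1}; the full Theorem~\ref{thm:ext} is neither needed nor helpful here.
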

This is a generalization of Theorem 1.2 of \cite{mmvy13}, which is the same statement with a bounded degree assumption.

Let us make some comments on the proofs of these results now. 
Our main tool is a completely general coupling result, Proposition \ref{prop:prelim_coup}, which shows that on any graph, 
if the process starting from a single vertex survives for a time comparable to the size of the graph, then with high probability 
it couples with (meaning that it is equal to) the process starting from full occupancy. It is well-known that this, together with a mild
lower bound on the extinction time, already implies Theorem \ref{thm:couple}. 
Another important consequence is Proposition \ref{lem:induction} which asserts 
that for any decomposition of a graph into disjoint components (or subgraphs), the mean extinction time on the original graph is larger than 
the product of the mean extinction times on these subgraphs, up to some correction term. This term remains negligible as long as the number of components in the decomposition is not too large. 
Such a result is of course particularly well suited for proofs 
going by induction on the size of the graph, specially for proving exponential (or almost exponential) lower bounds, in virtue of the formula $\exp(x+y)=\exp(x)\exp(y)$. 

With Proposition \ref{lem:induction} at hand, we prove Theorem \ref{thm:ext} and also give a new proof of Theorem \ref{thm:ext_bound} (ii), simpler than the one in \cite{mmvy13}. Since in Theorem \ref{thm:ext_bound} (ii) it is assumed that the degrees are bounded, one only needs to split the graph in a bounded number of pieces, independently of the size of the graph, so that the correction term in Proposition \ref{lem:induction} causes no problem, and we get a true exponential lower bound (a similar proof was used in \cite{CMMV13} in the setting of finite regular trees). 
However, for general graphs, the number of pieces required in the decomposition might be very large, making 
the correction term explode, and this explains why we have the logarithmic term in Theorem \ref{thm:ext}.

Now the paper is organized as follows. Section 2 contains all the material preparing to the proofs of the main results. 
In particular in Subsection \ref{sec.not} we recall some standard definitions and fix some notation. 
In subsection \ref{sec.basictool}, we give some basic tools, among which some preliminary estimates for the contact process on a line segment and a star graph. 
In Subsection \ref{sec.maintool} we state and prove the main tools discussed above, namely the coupling result, Proposition \ref{prop:prelim_coup}, and Proposition \ref{lem:induction}. Then Section 3 contains the actual proofs of the main results. It is organized as follows. 
We first give in Subsection \ref{sec.lev1} a mild polynomial lower bound. As we already mentioned, together with the coupling lemma, this implies Theorem \ref{thm:couple}: we explain this in slightly more details in Subsection \ref{sec.couple}.   In Subsection \ref{sec.lev2} we prove a stretched exponential lower bound, which is a necessary 
intermediate step toward the proof of Theorem \ref{thm:ext}. 
In Subsection \ref{sec.extbound} we explain how one can also deduce Theorem \ref{thm:ext_bound} (ii),  
by using induction on the size of the graph.
Finally the full proof of Theorem \ref{thm:ext} is given in Subsections \ref{sec.lev3} and \ref{sec:lev4} where we put all pieces together.

\section{Preliminary results and tools}
\label{s:prelim}

\subsection{Notation and definitions} \label{sec.not}
A \textit{graph} will be understood as a set $V$ of vertices and a set $E \subset \{\{x,y\} \subset V: x \neq y\}$ of edges. Thus, for convenience we will not explicitly treat graphs with loops (edges that start and end at the same vertex) and parallel edges between vertices, though one can define the contact process on those graphs as well and our results could then be readily adapted. The graphs we consider will always be connected. We denote by $|G|$ the number of vertices of $G$; for a set $A$, we denote by $|A|$ the number of elements of $A$. We will often abuse notation and identify a graph with its set of vertices; for example, we may write $\{0,1\}^G$ in place of $\{0,1\}^V$.

\begin{remark}
For several of our results, it is sufficient to give a proof for trees only. For example, if Theorem \ref{thm:ext} is proved for trees and we then consider a general graph $G$, we can apply the result to an arbitrary spanning tree $T$ of $G$ and observe that the contact process on $T$ is dominated (in the natural stochastic order of configurations) by the contact process on $G$, hence the extinction time of the latter is larger. We will not repeat this sufficiency in every situation in which it applies.
\end{remark}

From now on, we fix a value $\lambda > \lambda_c(\Z)$ and will omit it from the notation. In particular, many of the constants we define below may depend on $\lambda$. In order to fix notation, we will quickly go over the very well-known \textit{graphical construction} of the contact process. Fixing $G = (V,E)$, we take a family of independent Poisson point processes on $[0,\infty)$,
$$(D^x)_{x \in V} \text{ each with rate 1,}\qquad (D^{(x,y)})_{x,y \in V: \{x,y\} \in E} \text{ each with rate } \lambda.$$
Such a family is called a \textit{Harris system}. We view each of these processes as a random discrete subset of $[0,\infty)$. Arrivals of the processes $(D^x)$ are called \textit{recovery marks}, and arrivals of the processes $(D^{(x,y)})$ are called \textit{transmissions}. Given $x,y\in V$ and $0 \leq s < t$, an \textit{infection path} from $(x,s)$ to $(y,t)$ is a function $\gamma:[s,t] \to V$ such that
$$\gamma(s) = x,\quad \gamma(t) = y,\quad s \notin D^{\gamma(s)} \text{ for all } s \quad\text{ and } s \in D^{(\gamma(s-), \gamma(s))} \text{ whenever } \gamma(s-) \neq \gamma(s). $$
If such a path exists, we say $(x,s)$ and $(y,t)$ are connected by an infection path, and write $(x,s) \leftrightarrow (y,t)$. We convention to put $(x,s) \leftrightarrow (x,s)$. For $A \subset V$ and $I \subset [0,t]$, we write $A \times I \leftrightarrow (y,t)$ if $(x,s) \leftrightarrow (y,t)$ for some $x \in A$ and $s \in I$; similarly we write $(x,s) \leftrightarrow B \times J$ and $A \times I \leftrightarrow B \times J$. Finally, given $C \subset V$, we write $(x,s) \stackrel{C}{\leftrightarrow} (y,t)$ if there exists an infection path from $(x,s)$ to $(y,t)$ that is entirely contained in $C$. (Similarly, we write $A \times I \stackrel{C}{\leftrightarrow} (y,t)$, $(x,s) \stackrel{C}{\leftrightarrow} B \times J$ and $A \times I \stackrel{C}{\leftrightarrow} B \times J$).

Given $A \subset V$, setting 
$$\xi^A_t(x) = \mathds{1}\{A \times \{0\} \leftrightarrow (x,t)\}\qquad t \geq 0,$$
we obtain a Markov process $(\xi^A_t)_{t \geq 0}$ with $\xi^A_0 = \mathds{1}_A$ and the same distribution as the process given by the generator \eqref{eq:generator}. We will always assume that the contact process is constructed in this  way.

As mentioned in the introduction, we denote by $\underline{0}$ and $\underline{1}$ the configurations which are identically 0 and 1, respectively, and define the extinction time $\uptau_G = \inf\{t: \xi^{\underline{1}}_t  = \underline{0}\}$.

\subsection{Some preliminary results about graphs and the contact process}
\label{sec.basictool}
We will now state a few results concerning graphs and the contact process on line segments and star graphs. These results will be the basic tools in our proofs.

The first two results are not new, but for the sake of completeness we sketch their proof, as they are short and elementary.  
\begin{lemma}\label{lem:split} (i) (Lemma 3.1 in \cite{mmvy13}) Let $n, d \in \N$ with $d < n$. If $T$ is a tree of size $n$ in which all vertices have degree bounded by $d$, then there exists an edge whose removal separates $T$ into two subtrees $T_1$ and $T_2$ both of size at least $\lfloor n/d\rfloor$. \\
(ii) If $T$ is a tree of size $n$, $T$ has a vertex $x$ such that the subgraphs attached to $x$ all have size smaller than or equal to $|T|/2$.
\end{lemma}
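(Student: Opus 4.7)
The plan is to prove (ii) first, by the standard centroid argument, and then deduce (i) from it.

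For (ii), define $\phi(x) := \max\{|S| : S \text{ is a subtree of } T \text{ attached to } x\}$ and let $x^* \in V(T)$ be a vertex minimizing $\phi$. I claim $\phi(x^*) \leq n/2$. Suppose for contradiction that some subtree $S^*$ attached to $x^*$ has $|S^*| > n/2$, and let $y$ be the unique neighbor of $x^*$ that lies in $S^*$. Consider the subtrees attached to $y$: one of them is $T \setminus S^*$ (the component of $T \setminus \{y\}$ containing $x^*$), which has size $n - |S^*| < n/2 < \phi(x^*)$; all the others are contained in $S^* \setminus \{y\}$ and hence have size at most $|S^*| - 1 < \phi(x^*)$. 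Therefore $\phi(y) < \phi(x^*)$, contradicting the choice of $x^*$.

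For (i), apply (ii) to obtain a vertex $c$ such that every subtree $S_1, \dots, S_k$ attached to $c$ satisfies $|S_i| \leq n/2$. Since $\deg(c) \leq d$ we have $k \leq d$, and since $\sum_i |S_i| = n-1$ one of them, say $S_{i^*}$, satisfies
\[
|S_{i^*}| \geq \left\lceil \frac{n-1}{d}\right\rceil \geq \left\lfloor \frac{n}{d}\right\rfloor,
\]
the last inequality being a short arithmetic check (separating the cases $d \mid n$ and $d \nmid n$). Let $e$ be the edge of $T$ that connects $c$ to the neighbor lying in $S_{i^*}$. Removing $e$ disconnects $T$ into $T_1 := S_{i^*}$ and $T_2 := T \setminus S_{i^*}$. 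By construction $|T_1| \geq \lfloor n/d \rfloor$, while $|T_2| = n - |S_{i^*}| \geq n/2 \geq \lfloor n/d \rfloor$ (one may assume $d \geq 2$, the only other possibility being $d=1$ with $n=2$, a trivial case).

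There is no substantial obstacle here; both claims are classical centroid-type facts and the only small point requiring attention is the arithmetic inequality $\lceil (n-1)/d\rceil \geq \lfloor n/d\rfloor$ (which holds whenever $d \leq n$).
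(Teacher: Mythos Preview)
Your proof is correct. For part (ii), both you and the paper give the standard centroid argument: you minimize the function $\phi$, while the paper walks from vertex to vertex towards the oversized component until the process terminates; these are equivalent formulations.

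The genuine difference is in (i). The paper proves (i) independently of (ii): arguing by contradiction, it selects an edge $\{x,y\}$ whose removal leaves the larger of the two pieces as small as possible, and then counts vertices around $x$ to reach a contradiction. You instead deduce (i) from (ii): once the centroid $c$ is in hand, the largest subtree hanging off $c$ already has size between $\lfloor n/d\rfloor$ and $n/2$, so the edge joining it to $c$ does the job. Your route is shorter and makes explicit the logical dependence of (i) on (ii); the paper's extremal-edge argument is self-contained and does not invoke the centroid at all.

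One small slip: your closing parenthetical asserts that $\lceil (n-1)/d\rceil \geq \lfloor n/d\rfloor$ ``holds whenever $d \leq n$'', but this fails for $d=1$. You already excluded $d=1$ a few lines earlier, so this is only a wording issue. (Indeed, the statement of (i) itself is vacuous/degenerate when $d=1$: the only such tree is the single edge on two vertices, and $\lfloor n/d\rfloor = 2$ cannot be achieved on both sides. Both your argument and the paper's tacitly assume $d\geq 2$.)
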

\begin{proof} To prove (i), suppose the result is not true for some tree $T$. 
Consider an edge $\{x,y\}$, whose removal separates $T$ into two subtrees $T_x$ and $T_y$, attached respectively to $x$ and $y$, 
with the largest one being of minimal size among all edges of $T$. 
Assume for instance that $|T_x|\ge |T_y|$. Our starting hypothesis on $T$ implies then that $ |T_y|\le \lfloor n/d\rfloor-1$. 
Moreover, by definition of the edge $\{x,y\}$ all subtrees attached to $x$ must have size bounded by $n/2$, and thus even by $\lfloor n/d\rfloor-1$, using again our hypothesis on $T$. But since $x$ is of degree smaller than $d$, we deduce that 
$n=|T_y| + |T_x| \le (\lfloor n/d\rfloor-1) + 1+ (d-1) ( \lfloor n/d\rfloor - 1) < n$, and a contradiction. 

For (ii), choose any vertex in $T$, and call it $x_0$. 
If (by chance) all the subgraphs attached to $x_0$ have size bounded by $|T|/2$, there is nothing more to do. 
If not, one of them, call it $T_1$, has size larger than $|T|/2$. Call $x_1$ the only neighbor of $x_0$ in $T_1$. If all subgraphs attached 
to $x_1$ have size bounded by $|T|/2$, we are done, and if not one of them, say $T_2$, has size larger than $|T|/2$. 
Then the only thing to observe is that it cannot be the component containing $x_0$, as this one has size $|T\backslash T_1|$, which by definition of $T_1$ is smaller than $|T|/2$. Therefore, if we call $x_2$ the only neighbor of 
$x_1$ in $T_2$, we have $x_2\neq x_0$. Now we can continue like this, defining a sequence of vertices $(x_i)$, until we find a convenient vertex, and this has to happen, since the $(x_i)$ are all distinct and the graph is finite. 
\end{proof}

\begin{lemma} \label{lem:attract} 
(i) (Lemma 4.5 in \cite{mmvy13}) For any graph $G$,
\begin{equation}\label{eq:attract_geom}\P[\uptau_G \leq t] \leq \frac{t}{\E[\uptau_G]} \qquad \text{for all }t \geq 0.\end{equation}
(ii) For any graph $G$ with $n$ vertices and $m$ edges,
\begin{equation} \label{eq:upper_bound_E}\E\Ll[\uptau_G\Rr]\leq e^{n+2\lambda m}.\end{equation}
\end{lemma}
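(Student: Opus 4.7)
For part (i), my plan is to use attractiveness and the Markov property to derive an iterated submultiplicative bound on the survival probability, then integrate. Set $f(t)=\P[\uptau_G>t]$. By the graphical construction the map $\eta\mapsto \P_\eta[\xi_t\neq\underline{0}]$ is nondecreasing in $\eta$, so conditioning on $\mathcal{F}_{(k-1)t}$ and using $\xi^{\underline{1}}_{(k-1)t}\leq\underline{1}$ gives
$$f(kt)=\E\!\left[\mathbf{1}\{\xi^{\underline{1}}_{(k-1)t}\neq\underline{0}\}\cdot\P_{\xi^{\underline{1}}_{(k-1)t}}[\xi_t\neq\underline{0}]\right]\leq f((k-1)t)\cdot f(t),$$
and hence by induction $f(kt)\leq f(t)^k=(1-\P[\uptau_G\leq t])^k$. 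Since $f$ is non-increasing, bounding $\E[\uptau_G]=\int_0^\infty f(s)\,ds$ above by the Riemann sum $t\sum_{k\geq 0}f(kt)$ and summing the resulting geometric series yields $\E[\uptau_G]\leq t/\P[\uptau_G\leq t]$, which rearranges to (i).

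For part (ii), I would combine (i) at $t=1$ with an explicit lower bound on $\P[\uptau_G\leq 1]$ read off directly from the Harris system. Let $\mathcal{D}$ be the event that every recovery process $D^x$ has at least one arrival in $[0,1]$ and every transmission process $D^{(x,y)}$ has no arrival in $[0,1]$. On $\mathcal{D}$ each vertex is healed at its first recovery mark and can never be re-infected (there are no transmissions in the whole interval), so $\xi^{\underline{1}}_1=\underline{0}$, i.e., $\uptau_G\leq 1$. The Poisson processes are independent, hence
$$\P[\mathcal{D}]=(1-e^{-1})^n\cdot e^{-2\lambda m}\geq e^{-n-2\lambda m},$$
using $1-e^{-1}\geq e^{-1}$. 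Plugging this estimate into part (i) with $t=1$ gives $\E[\uptau_G]\leq 1/\P[\uptau_G\leq 1]\leq e^{n+2\lambda m}$, as required.

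Neither step poses a real obstacle; the lemma is essentially a bookkeeping exercise on the Harris construction. The single point worth checking carefully is the direction of attractiveness used in (i): a pointwise-smaller initial configuration produces, in the same graphical construction, a pointwise-smaller infected set at all later times, so the survival probability is monotone \emph{increasing} in the starting state, which is exactly the inequality used above. Everything else is routine.
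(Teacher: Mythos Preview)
Your proof is correct and is essentially the same argument as the paper's: for (i) you spell out the submultiplicativity $f(kt)\le f(t)^k$ (which is exactly the geometric domination $\uptau_G\le_{\text{st}} t\cdot Y$ invoked in the paper) and sum, and for (ii) you use the same Harris-system event (a recovery at every vertex and no transmission on any edge in $[0,1]$), feeding it through part (i) at $t=1$ rather than re-doing the geometric bound directly. Nothing more to add.
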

\begin{proof} (sketch) The first statement follows from the fact that, for any $t > 0$, by attractiveness of the contact process, $\uptau_G$ is stochastically dominated by $t \cdot Y$, where $Y$ is a random variable with geometric distribution with parameter $\P[\uptau_G \leq t]$. The second statement follows from observing that, in each unit time interval, with probability $e^{-n-2\lambda m}$ there is a recovery mark in each vertex of $G$ and no transmission along any of the edges of $E$.
\end{proof}

\vspace{0.2cm}
\noindent The next two lemma are part of the folklore now. In particular Lemma \ref{lem:infest_segment} was already used in \cite{mmvy13} (see Proposition 2.1 thereof), but without a full proof, so for convenience of the reader we provide one in the appendix.     
\begin{lemma}\label{lem:infest_segment}
There exists a constant $c_{\text{line}} > 0$, such that for any $n $, the contact process on the line segment $\{0,\ldots, n\}$ satisfies:
\begin{align}
\label{eq:connect_segment}&\P\Ll[(0,0) \leftrightarrow (n,t) \text{ for some } t \leq n/c_\text{line}\Rr] > c_\text{line};\\[.2cm]
&\label{eq:2path_line} \P\Ll[\text{there exists $x$ such that }\xi_t^x \neq \underline 0\text{ and } \xi_t^x \neq \xi_t^{\underline 1}  \Rr] < e^{-c_\text{line} \cdot n} \qquad \text{for all } t  \geq n/c_\text{line};\\[.2cm]
& \E\Ll[\uptau_{\{0,\ldots, n\}}\Rr] \ge e^{c_\text{line} \cdot n}. \label{eq:exp_line}
\end{align}
\end{lemma}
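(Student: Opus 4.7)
My plan is to deduce the three statements from classical properties of the supercritical one-dimensional contact process: the positive edge speed and a Bezuidenhout--Grimmett block construction.

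For \eqref{eq:connect_segment}, I would consider the contact process on the half-line $\{0,1,2,\ldots\}$ started from $\mathds{1}_{\{0\}}$, using the restriction of the Harris system to this half-line. Since its critical rate equals $\lambda_c(\Z)$, the process survives with positive probability $\rho > 0$, and on survival the rightmost infected site $r_s$ has positive asymptotic speed $\alpha(\lambda) > 0$. Taking $T := 2n/\alpha$, the hitting time $\tau := \inf\{s \geq 0 : r_s = n\}$ is at most $T$ with probability at least $\rho/2$, and the infection path realizing $(0,0) \leftrightarrow (n,\tau)$ in the half-line graphical construction lies in $\{0,\ldots,n\}$ since no site $> n$ has been touched before $\tau$ and negative sites are excluded by construction. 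The same path is therefore valid in the segment $\{0, \ldots, n\}$, giving \eqref{eq:connect_segment} for some $c_\text{line}$ depending only on $\rho$ and $\alpha$.

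For \eqref{eq:exp_line}, I would use a Bezuidenhout--Grimmett block construction: at a sufficiently large coarse-grained space-time scale, the supercritical contact process stochastically dominates a $1$-dependent supercritical oriented percolation of density arbitrarily close to $1$. A Peierls contour argument then yields $\P[\xi^{\underline{1}}_{T_0} = \underline{0}] \leq e^{-cn}$ for $T_0 := n/c_\text{line}$, and iteration via the Markov property gives $\P[\uptau_{\{0,\ldots,n\}} > k T_0] \geq (1 - e^{-cn})^k$ for each integer $k$. Taking $k$ of order $e^{cn/2}$ in the identity $\E[\uptau_{\{0,\ldots,n\}}] = \int_0^\infty \P[\uptau_{\{0,\ldots,n\}} > s]\,ds$ produces the desired exponential lower bound.

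For \eqref{eq:2path_line}, I would use self-duality to recast the event. Writing $\hat{\xi}^{\,y}_s := \{z : (z, t-s) \leftrightarrow (y,t)\}$ for the dual process started at $(y,t)$, the identities $\xi^x_t(y) = \mathds{1}\{x \in \hat{\xi}^{\,y}_t\}$ and $\xi^{\underline{1}}_t(y) = \mathds{1}\{\hat{\xi}^{\,y}_t \neq \emptyset\}$ show that the event in \eqref{eq:2path_line} is: there exist $x, y \in \{0, \ldots, n\}$ such that the forward cone from $(x,0)$ and the dual backward cone from $(y,t)$ both traverse the space-time slab $\{0,\ldots,n\} \times [0,t]$, yet never meet (so $(x,0) \not\leftrightarrow (y,t)$). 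Choosing $c_\text{line}$ small relative to $\alpha$ and applying a large-deviations estimate on the range of a surviving cone, each cone's range at the intermediate time $t/2$ fills the entire segment $\{0,\ldots,n\}$ except with probability $e^{-cn}$, so the two cones must intersect at time $t/2$, contradicting the non-meeting. A union bound over the $(n+1)^2$ pairs $(x,y)$ absorbs the polynomial factor and yields \eqref{eq:2path_line}. The main obstacle is precisely this conditional large-deviations estimate --- ``on survival, the cone of descendants fills $\{0,\ldots,n\}$ except with probability $e^{-cn}$'' --- which is also implicit in the block construction for \eqref{eq:exp_line}; it lies at the heart of the classical theory of the one-dimensional supercritical contact process and drives both exponential bounds.
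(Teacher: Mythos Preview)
Your arguments for \eqref{eq:connect_segment} and \eqref{eq:exp_line} are correct and essentially match the paper's: the paper also derives \eqref{eq:connect_segment} from the linear growth of the right edge (via Kuczek's renewal structure on $\Z$ rather than the half-line process, a cosmetic difference), and it simply cites Liggett's book for \eqref{eq:exp_line}.

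For \eqref{eq:2path_line} your duality set-up is fine, but the step ``each cone's range at time $t/2$ fills the entire segment $\{0,\ldots,n\}$'' is a genuine gap. Read literally it is false: even $\xi^{\underline 1}_{t/2}$ is, with overwhelming probability, a \emph{proper} subset of $\{0,\ldots,n\}$, of density close to that of the upper invariant measure, which is strictly less than $1$. Hence $\xi^x_{t/2}$ and $\hat\xi^{(y,t)}_{t/2}$ are two (independent, since they depend on disjoint time intervals of the Harris system) random subsets of density strictly below $1$, and nothing you have written forces them to meet at time $t/2$. The natural repair --- replace ``fills the segment'' by ``equals $\xi^{\underline 1}_{t/2}$'' --- is precisely \eqref{eq:2path_line} at time $t/2$, so the argument becomes circular.

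The paper avoids this by a sharper use of one-dimensional planarity. Its key observation is: if $(x,0)\stackrel{\{0,\ldots,n\}}{\longleftrightarrow}\{0\}\times[0,t]$ and $(x,0)\stackrel{\{0,\ldots,n\}}{\longleftrightarrow}\{n\}\times[0,t]$, then automatically $\xi^x_t=\xi^{\underline 1}_t$, because any infection path in $\{0,\ldots,n\}\times[0,t]$ reaching time $t$ must cross one of the two paths out of $(x,0)$. Thus one only needs that, on $\{\xi^x_t\neq\underline 0\}$, the process from $x$ hits \emph{each endpoint} at \emph{some} time in $[0,t]$ --- an edge-speed statement --- rather than occupying all sites at a fixed time. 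The paper reduces this to the $\Z$-estimate $\P\bigl[\zeta^0_t\neq\underline 0,\ \max_{s\le t} r_s < c_\Z t/2\bigr]<e^{-c_\Z t}$, which it extracts from the Kuczek renewals. In short, the cones are forced to meet not because either is full at time $t/2$, but because one of them spans the segment from wall to wall over $[0,t]$, and in one dimension this traps every other infection path.
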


\begin{lemma}\label{lem:infest_star}
There exists a constant $c_{\text{star}} > 0$ such that, for any $n\ge 2$, the contact process on the star graph $S_n$ of size $n$ satisfies:
\begin{align}
& \label{eq:star_survive} \text{for any $x$, }\P \Ll[\xi_n^x \neq \underline 0 \Rr]>c_{\text{star}};\\[.2cm]
&\label{eq:2path_star} \P\Ll[\text{there exists $x$ such that }\xi_t^x \neq \underline 0\text{ and }  \xi_t^x \neq \xi_t^{\underline 1}  \Rr] < e^{-c_\text{star} \cdot n} \qquad \text{for all } t  \geq n;\\[.2cm]
& \E\Ll[\uptau_{S_n}\Rr] \ge e^{c_\text{star} \cdot n}. \label{eq:exp_star}
\end{align}
\end{lemma}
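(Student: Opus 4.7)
My plan for Lemma~\ref{lem:infest_star} reduces all three parts to a single metastability estimate: there exist constants $\beta, c>0$ (depending only on $\lambda$) such that, starting from any configuration on $S_n$ with at least $\beta n$ infected leaves, the number of infected leaves at time $1$ is still at least $\beta n$ with probability at least $1 - e^{-cn}$. To prove this I would choose $\beta < \lambda/(1+\lambda)$, so that the leaf dynamics with an ``always on'' center has equilibrium above $\beta n$, and argue in two steps. Whenever the number $W_s$ of infected leaves is at least $\beta n$, the center gets infected at rate $\lambda W_s = \Omega(n)$ and recovers at rate $1$, so its off-intervals are typically of length $O(1/n)$ and, by Poisson concentration, the total length of off-intervals during $[0,1]$ is $o(1)$ except on an event of probability $e^{-\Omega(n)}$. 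Conditional on the center being on for almost all of $[0,1]$, each leaf evolves as a near-independent two-state chain with rates $(1,\lambda)$ whose equilibrium on-probability is $\lambda/(1+\lambda) > \beta$, so a Chernoff bound forces $W_1 \geq \beta n$ up to probability $e^{-\Omega(n)}$. Rigorously one sandwiches $W_s$ between two reference processes: an upper bound where the center is forced always on, and a lower bound where the center is off during every true off-interval.

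Given this estimate, the three assertions follow in a few lines. For \eqref{eq:exp_star}, iterating the estimate starting from $\underline 1$ shows that $W_k \geq \beta n$ for every integer $k \leq T$ with probability $\geq 1 - Te^{-cn}$; taking $T = \tfrac12 e^{cn}$ yields $\P[\uptau_{S_n}>T]\geq 1/2$, hence $\E[\uptau_{S_n}] \geq \tfrac14 e^{cn}$. For \eqref{eq:star_survive}, if $x$ is the center then with positive probability the center does not recover during $[0,\tfrac12]$ and transmits to at least $\beta n$ distinct leaves (Poisson concentration), placing us in the metastable regime; if $x$ is a leaf, with probability $\lambda/(1+\lambda)$ it transmits to the center before recovering, reducing to the previous case. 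Persistence of the metastable regime up to time $n$ then costs only $n\cdot e^{-cn} = o(1)$.

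For the coupling bound \eqref{eq:2path_star} I would work directly with the graphical construction. Fix $x$ and set $\zeta_s := \xi_s^{\underline 1} \setminus \xi_s^x$. On the event $\{\xi_t^x \neq \underline 0\}$, the metastability estimate implies that by time $t - n/2$ the process $\xi^x$ is in the metastable regime with probability $\geq 1 - e^{-cn}$. In that regime the center lies in $\xi^x$ for a positive fraction of every unit interval, so the center's sojourns in $\zeta$ are short: any such sojourn ends at rate $\lambda|\xi^x|=\Omega(n)$, because any leaf of $\xi^x$ firing to the center brings it into $\xi^x$. Thus the center spends only $o(1)$ fraction of time in $\zeta$, and during its time outside $\zeta$ no new leaves are injected into $\zeta$. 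Meanwhile, each leaf of $\zeta$ recovers at rate $1$, so $|\zeta_s|$ decays at effective rate $\Omega(1)$ and reaches $0$ within a further time $O(\log n)$ with probability $\geq 1-e^{-\Omega(n)}$. A union bound over the $n$ possible starting vertices $x$ completes the proof.

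The main obstacle of the whole argument is the metastability estimate itself: the delicate point is that during the (brief) off-intervals of the center, leaves decay at rate $1$ with no compensating infections, so one must quantitatively control the total length of these off-intervals over $[0,1]$ even in the presence of rare excursions during which the center repeatedly recovers without being immediately re-infected. Once this is done, the two-state chain sandwich coupling and standard concentration finish the proof cleanly.
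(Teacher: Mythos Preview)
Your metastability estimate and its application to \eqref{eq:exp_star} and \eqref{eq:star_survive} are essentially what the paper does: it quotes from \cite{mvy13} the bound
\[
\P\Ll[\,|\xi^A_1|\ge \tfrac{n}{40},\ \ell\{s\le 1:\xi^A_s(o)=1\}>\tfrac34\,\Rr]>1-e^{-cn}\qquad\text{whenever }|A|\ge \tfrac{n}{40},
\]
and then uses exactly the warm-up you sketch (single vertex $\to$ center $\to$ at least $n/40$ leaves with uniformly positive probability).

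For \eqref{eq:2path_star} the paper takes a different and shorter route, via \emph{duality}. Writing $\hat\xi^{(w,t)}_s(y)=\mathds{1}\{(y,t-s)\leftrightarrow(w,t)\}$, the event $\{\xi^x_t\neq\underline 0,\ \xi^x_t\neq\xi^{\underline 1}_t\}$ forces the existence of some $w$ with $\hat\xi^{(w,t)}_t\neq\underline 0$ and $\xi^x_s\cap\hat\xi^{(w,t)}_{t-s}=\varnothing$ for every $s\le t$. But the metastability estimate, applied once to $\xi^x$ and once to the dual $\hat\xi^{(w,t)}$ (which has the same law), shows that on their respective survival events each has the center $o$ occupied for more than half of the first $n$ time units, except with probability $e^{-cn}$; these two events together force $\xi^x_s(o)=\hat\xi^{(w,t)}_{t-s}(o)=1$ for some common $s\in[0,n]$, contradicting disjointness. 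A union bound over $x$ and $w$ finishes. No analysis of the discrepancy process $\zeta=\xi^{\underline 1}\setminus\xi^x$ is needed.

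Your direct approach to \eqref{eq:2path_star} can be made to work, but it needs more than you indicate, and one quantitative claim is wrong as stated: ``$|\zeta_s|$ reaches $0$ within a further time $O(\log n)$ with probability $\ge 1-e^{-\Omega(n)}$'' is impossible, since even for a pure death process the last step $1\to 0$ takes an $\mathrm{Exp}(1)$ time, so $\P[\zeta\text{ not yet extinct at time }T]\ge e^{-T}$, which is $e^{-\Omega(n)}$ only for $T=\Omega(n)$. This is easy to repair (you have $n/2$ time units to spare), but the more substantive point is that turning ``the center spends only an $o(1)$ fraction of time in $\zeta$'' into an \emph{exponential} tail bound on the total number of leaves ever injected into $\zeta$ requires a real argument---for instance, bounding the number of $\zeta$-sojourns of the center by the Poisson number of center recovery marks, and within each sojourn dominating the number of outward arrows by a geometric variable. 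The duality argument sidesteps all of this by reducing the coupling question to a pigeonhole on the occupation time of the single vertex $o$.
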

Let $F$ be either a line segment or a star of size $n$. We say 
that $F$ is \textit{lit} in configuration $\xi \in \{0,1\}^F$, or simply that $\xi$ is lit, if 
$$\P\Ll[\xi_{\exp(c_{0}\cdot n)}\neq \underline 0 \mid \xi_0=\xi\Rr]>1-e^{-c_{0}\cdot n},$$ 
with $c_{0}=\min(c_{\text{line}},c_{\text{star}})/3$.
The previous results imply the following: 
\begin{corollary}
\label{cor.lit}
Let $F$ be either a line segment or a star graph of size $n$. Then 
\begin{itemize}
\item[(i)] The fully occupied configuration $\underline 1$ is always lit. 
\item[(ii)] If $F$ is lit in a configuration $\xi$, then 
\begin{equation}
\label{eq:lit}
\P\Ll[F \text{ is lit in configuration }\xi_t\mid \xi_0=\xi\Rr] > 1- 4e^{- c_{0}\cdot n} \qquad \text{for all } t  \in [n/c_0,e^{c_{0}\cdot n}].
\end{equation}
\item[(iii)] Let $\tilde c_{0}= \min(c_{\text{line}}^2,c_{\text{star}})$. Then for any $x$, 
\begin{equation}
\label{eq:litx}
\P\Ll[F \text{ is lit in configuration }\xi_t^x\Rr] > \tilde c_{0} - e^{-c_{\text{star}}\cdot n}-4e^{- c_{0}\cdot n} \ \text{for all } t  \in [n/c_0,e^{c_0\cdot n}].
\end{equation}
\end{itemize}
\end{corollary}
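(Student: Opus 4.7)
The plan is to establish the three parts in order, using the Harris graphical construction, the two-path estimates \eqref{eq:2path_line}/\eqref{eq:2path_star}, and the lower bounds on $\E[\uptau_F]$ from \eqref{eq:exp_line}/\eqref{eq:exp_star}. The choice $c_{0}=\min(c_{\text{line}},c_{\text{star}})/3$ ensures $\E[\uptau_F]\ge e^{3c_{0}n}$ in both cases. Part (i) is immediate from Lemma \ref{lem:attract}(i):
\[\P[\xi_{e^{c_{0}n}}^{\underline 1}=\underline 0]\le \frac{e^{c_{0}n}}{\E[\uptau_F]}\le e^{-2c_{0}n}<e^{-c_{0}n}.\]

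For part (ii), fix $\xi$ lit and $t\in[n/c_{0},e^{c_{0}n}]$, and proceed in two steps. First, I would show $\P[\xi_t^{\xi}=\xi_t^{\underline 1}]\ge 1-2e^{-c_{0}n}$: the Harris coupling gives $\xi_t^{\xi}=\bigcup_{y\,:\,\xi(y)=1}\xi_t^y$ and $\xi_t^{\xi}\le\xi_t^{\underline 1}$, and since $t\ge n/c_{0}$ exceeds both $n/c_{\text{line}}$ and $n$, the two-path estimate implies $\xi_t^y\in\{\underline 0,\xi_t^{\underline 1}\}$ for every $y$ off an event of probability $\le e^{-cn}$ with $c\ge 3c_{0}$; hence $\xi_t^{\xi}\in\{\underline 0,\xi_t^{\underline 1}\}$ with the same probability, while the lit hypothesis together with $t\le e^{c_{0}n}$ gives $\P[\xi_t^{\xi}=\underline 0]<e^{-c_{0}n}$. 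Second, I would show $\P[\xi_t^{\underline 1}\text{ is lit}]\ge 1-2e^{-c_{0}n}$: writing $B$ for the set of non-lit configurations, the Markov property at time $t$ yields
\[\P[\xi_{t+e^{c_{0}n}}^{\underline 1}=\underline 0]\ge e^{-c_{0}n}\,\P[\xi_t^{\underline 1}\in B],\]
while Lemma \ref{lem:attract}(i) bounds the left side by $2e^{c_{0}n}/e^{3c_{0}n}=2e^{-2c_{0}n}$, so $\P[\xi_t^{\underline 1}\in B]\le 2e^{-c_{0}n}$. A union bound of the two steps gives $\P[\xi_t^{\xi}\text{ is lit}]\ge 1-4e^{-c_{0}n}$.

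For part (iii), the second step of (ii) applies verbatim, so it remains to show $\P[\xi_t^x=\xi_t^{\underline 1}]\ge\tilde c_{0}-e^{-c_{\text{star}}n}-2e^{-c_{0}n}$ for $t\in[n/c_{0},e^{c_{0}n}]$. The key observation is that $\{\xi_s^x=\xi_s^{\underline 1}\}$ is monotone increasing in $s$ under the Harris coupling (once the two configurations agree they remain equal thereafter), so it suffices to establish the bound at an early time. For a star graph, combining \eqref{eq:star_survive} and \eqref{eq:2path_star} at $s=n$ gives
\[\P[\xi_n^x=\xi_n^{\underline 1}]\ge\P[\xi_n^x\ne\underline 0]-e^{-c_{\text{star}}n}\ge c_{\text{star}}-e^{-c_{\text{star}}n}.\]
For a line segment $\{0,\ldots,n-1\}$ and any starting vertex $x$, I would define $E_L$ to be the event that an infection path from $(x,0)$ reaches $(0,s)$ for some $s\le n/c_{\text{line}}$ via vertices in $\{0,\ldots,x\}$, and $E_R$ symmetrically; applying \eqref{eq:connect_segment} to each subsegment gives $\P[E_L],\P[E_R]\ge c_{\text{line}}$, and since both events are increasing in the Harris system the FKG inequality yields $\P[E_L\cap E_R]\ge c_{\text{line}}^{2}$. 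The main technical obstacle is precisely the last step of the line case: translating the event $E_L\cap E_R$ (``$\xi^x$ visits both endpoints'') into the identity $\xi_s^x=\xi_s^{\underline 1}$ at some $s\le n/c_{0}$ requires a careful combination with \eqref{eq:2path_line} applied at vertices $0$ and $n-1$ and with the monotonicity of the coupling event; the loose error $e^{-c_{\text{star}}n}$ in the statement is precisely tuned to absorb the various cumulative error terms.
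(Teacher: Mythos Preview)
Your treatment of parts (i) and (ii) is correct and essentially identical to the paper's: the same decomposition into ``$\xi_t^\xi$ couples with $\xi_t^{\underline 1}$'' (via the two-path estimate and the lit hypothesis) and ``$\xi_t^{\underline 1}$ is lit'' (via Lemma~\ref{lem:attract}(i) and the Markov property at time $t$) is exactly what the paper does in \eqref{eq:xitA}--\eqref{eq:xitA2}.

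For part (iii) your setup is also the paper's, but the ``technical obstacle'' you flag in the line case is not real, and your proposed fix (invoking \eqref{eq:2path_line} at the endpoints) is unnecessary. The missing observation is purely deterministic and planar: on a line segment, two infection paths $\gamma_1,\gamma_2$ with $\gamma_1(0)<\gamma_2(0)$ and $\gamma_1(s)\ge\gamma_2(s)$ must meet at some time in $[0,s]$ (they are integer-valued and jump by $\pm 1$). Hence on your event $E_L\cap E_R$, for any $t\ge n/c_{\text{line}}$ and any infection path $\gamma$ from $(z,0)$ to $(y,t)$: if $z<x$ then $\gamma$ crosses your left path $\gamma_L$ (since $\gamma(0)=z<x=\gamma_L(0)$ while $\gamma(s_1)\ge 0=\gamma_L(s_1)$), and symmetrically if $z>x$. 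In either case $(x,0)\leftrightarrow(y,t)$. This shows directly that $E_L\cap E_R\subset\{\xi_t^x=\xi_t^{\underline 1}\}$ for all $t\ge n/c_{\text{line}}$, with no probabilistic loss. This is exactly the content of the paper's inclusion
\[
\{(x,0)\leftrightarrow(0,t)\text{ and }(x,0)\leftrightarrow(n,t)\}\subset\{\xi_t^x\neq\underline 0,\ \xi_t^x=\xi_t^{\underline 1}\},
\]
which it then combines with \eqref{eq:connect_segment} (plus FKG) and parts (i)--(ii). The error term $e^{-c_{\text{star}}\cdot n}$ in the statement arises only from the star case via \eqref{eq:2path_star}; nothing needs to be ``absorbed'' in the line case.
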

\begin{proof} Part (i) is a direct consequence of Lemma \ref{lem:attract} (i), \eqref{eq:exp_line} and \eqref{eq:exp_star}. For the second part, assume that $F$ is lit in some configuration $\xi$, and denote by $A$ the set of configurations which are not lit. Note first that 
\begin{align}
\label{eq:xitA}
\nonumber \P\Ll[\xi_t\in A\mid \xi_0=\xi\Rr] & \le \P\Ll[\xi_t^{\underline 1}\in A\Rr]+\P\Ll[\xi_t\neq \xi_t^{\underline 1}, \, \xi_t\neq \underline 0\mid \xi_0=\xi\Rr]  
+ \P\Ll[\xi_t=\underline 0 \mid \xi_0=\xi\Rr] \\
&\le \P\Ll[\xi_t^{\underline 1}\in A\Rr]+e^{-3c_{0}\cdot n}+ e^{-c_{0}\cdot n},
\end{align}
where for the last inequality we have used \eqref{eq:2path_line} and \eqref{eq:2path_star} for the second term and the definition 
of a lit configuration for the last term.  Now by using Lemma  \ref{lem:attract} (i) and the Markov property, we get 
\begin{align}
\nonumber 2e^{-2c_{0}\cdot n} \ge & \P\Ll[\uptau_F\le t+e^{c_{0}\cdot n}\Rr] \ge \P\Ll[\uptau_F\le t+e^{c_{0}\cdot n},\, \xi_t^{\underline 1}\in A\Rr]\\
&\label{eq:xitA2} \ge e^{-c_{0}\cdot n}\cdot \P\Ll[\xi_t^{\underline 1}\in A\Rr].
\end{align}
The result follows by combining \eqref{eq:xitA} and \eqref{eq:xitA2}. For Part (iii), note first that if $F$ is a line segment, then 
$$\left\{(x,0)\leftrightarrow (0,t) \text{ and }(x,0)\leftrightarrow (n,t)\right\}\subset \left\{\xi_t^x\neq \underline 0,\, \xi_t^x = \xi_t^{\underline 1}\right\}.$$
Therefore by combining \eqref{eq:connect_segment}, together with Part (i) and (ii), we deduce the result for a line segment. Likewise if $F$ is a star graph the result follows from \eqref{eq:star_survive}, \eqref{eq:2path_star}, together with Part (i) and (ii).  
\end{proof}

\subsection{A coupling result and consequences}
\label{sec.maintool}
The next proposition is the coupling result discussed already in the introduction. 
\begin{proposition}\label{prop:prelim_coup}
There exists $c_\text{coup} > 0$, such that for any $n\ge 2$ and any tree $G$ with $n$ vertices,
$$\P\Ll[\xi^A_t \neq \underline{0},\;\xi^A_t \neq \xi^{\underline{1}}_t\Rr] \leq \exp\left\{-c_\text{coup}\cdot  \left \lfloor \frac{t}{n (\log n)^3}\right\rfloor\right\} \quad \text{for all } t \geq 0 \text{ and } A \neq \varnothing.$$
\end{proposition}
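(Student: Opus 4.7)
The plan is to reduce the coupling problem to local coupling estimates on line segments and stars, for which Lemmas \ref{lem:infest_segment} and \ref{lem:infest_star} apply, and then iterate in time via the Markov property.

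First, I would cover the tree $G$ by a family $F_1,\ldots, F_M$ of subgraphs, each being a line segment or a star of size $\ell\asymp \log n$ (up to $\log\log n$ factors), with $M\le Cn/\log n$. Such a cover exists by a greedy procedure in the spirit of Lemma \ref{lem:split}: at each step, if the current subtree contains a vertex of degree $\geq \log n$, peel off a star of that size; otherwise, all vertices have degree $<\log n$, and since a tree with maximum degree $<d$ and $\geq n$ vertices has diameter $\geq \log n/\log d$, one can peel off a path of length $\geq \log n/\log\log n$. Iterate on the remaining components until the whole tree is covered.

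Second, I would prove a one-round estimate: for $T_0:= n(\log n)^3$, there exists $\rho<1$ such that
\begin{equation*}
\P\Ll[\xi^A_{T_0}\neq \underline 0,\;\xi^A_{T_0}\neq\xi^{\underline 1}_{T_0}\Rr]\le \rho.
\end{equation*}
I would split $[0,T_0]$ into $R=(\log n)^2$ sub-windows of length $n\log n$ each. In each sub-window, and for each substructure $F_i$ already reached by the infection from $A$, Corollary \ref{cor.lit}(iii) ensures that $F_i$ becomes \emph{lit} in $\xi^A$ by the end of the sub-window with probability $\geq \tilde c_0$. Since $\underline 1$ is always lit on $F_i$ by Corollary \ref{cor.lit}(i), the two-path estimates \eqref{eq:2path_line} and \eqref{eq:2path_star} then yield that $\xi^A$ and $\xi^{\underline 1}$ agree on $F_i$ with probability $1-e^{-c\log n}$ once $F_i$ is lit. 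The probability of failing to light $F_i$ in every sub-window is at most $(1-\tilde c_0)^R\le e^{-c(\log n)^2}$, and a union bound over $M\le Cn/\log n$ substructures gives total failure probability $o(1)$; hence $\rho$ can be taken $\le 1/2$ for $n$ large enough.

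Third, by the Markov property applied at times $T_0,2T_0,\ldots$,
\begin{equation*}
\P\Ll[\xi^A_t\neq\underline 0,\;\xi^A_t\neq\xi^{\underline 1}_t\Rr]\le \rho^{\lfloor t/T_0\rfloor},
\end{equation*}
which is the claimed bound with $c_\text{coup}=\log(1/\rho)$.

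The main obstacle will be the local step: going from substructure-wise lit-ness of $\xi^A$ to coupling on all of $G$. Lemmas \ref{lem:infest_segment} and \ref{lem:infest_star} describe the \emph{isolated} contact process on a line or star, while our $F_i$ is embedded in $G$ and can receive infections through edges leaving $F_i$. External infections from vertices where $\xi^A$ and $\xi^{\underline 1}$ disagree would, in principle, create new discrepancies on $F_i$ even right after a successful internal coupling. To control this, one must treat the substructures simultaneously rather than one at a time — for example, by running the isolated dynamics inside each $F_i$ in parallel (using monotonicity to pass back to the full process) and coordinating the time of coupling across all $F_i$. A secondary subtlety is ensuring the infection from $A$ actually reaches every $F_i$ within $[0,T_0]$; on a tree of diameter up to $n$ this requires a chain of successful ``hops'' between adjacent substructures, which is precisely what the extra $\log n$ factor beyond $n\log n$ is there to accommodate.
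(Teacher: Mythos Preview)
Your overall scaffolding (a one-round bound at scale $T_0=n(\log n)^3$, then Markov iteration) matches the paper's, but the heart of the one-round argument has a genuine gap that you yourself flag and do not resolve. The estimates \eqref{eq:2path_line}, \eqref{eq:2path_star} and Corollary~\ref{cor.lit} concern the \emph{isolated} process on $F_i$; they give no control over $\xi^A|_{F_i}$ for the process on $G$, and ``running the isolated dynamics in parallel and using monotonicity'' only yields $\xi^A_t \supseteq \xi^{\underline 1,\text{iso }F_i}_t$ on each $F_i$, which says nothing about equality with $\xi^{\underline 1}_t$ on $G$. More fundamentally, coupling means that for every $y$ with $G\times\{0\}\leftrightarrow (y,t)$ one must exhibit a path $A\times\{0\}\leftrightarrow (y,t)$. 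With your covering there is no mechanism forcing the forward path from $A$ and the backward (dual) path to $y$ to meet in the \emph{same} $F_i$ and stay inside it, so the two-path/merging estimate cannot be invoked. Coordinating coupling times across $M\asymp n/\log n$ pieces does not address this: it is a connectivity problem, not a timing problem.

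The paper avoids this by using a \emph{single} hub $G_0$, a line segment or star with $|G_0|\ge \max(\sqrt{\log n},\,\mathrm{diam}(G))$, rather than a cover. Because $|G_0|\ge \mathrm{diam}(G)$, any surviving infection reaches $G_0$ in time $O(|G_0|)$ with uniformly positive probability; repeating over $K=\lfloor(\log n)^2\rfloor$ windows and taking a union bound over all $x\in G$ shows that, for every surviving $x$, the forward path from $(x,0)$ and the dual path to $(y,s_K)$ each enter $G_0$ and survive \emph{inside} $G_0$ (i.e.\ via $\stackrel{G_0}{\leftrightarrow}$) in at least $3K/4$ of the windows. A separate ``merging'' event $E_k^{G_0}$, which is exactly the two-path estimate applied to the isolated $G_0$-dynamics, holds in at least $3K/4$ windows as well; at any common window the three events glue the forward and backward paths together. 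The hub idea is what makes the isolated two-path estimate legitimately applicable; your covering scheme does not provide a substitute for it.
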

This is an immediate consequence of the following lemma.
\begin{lemma}\label{lem:prelim_coup}
There exists $c_1 < 1$ such that, for any tree $G$ with $n$ vertices and any $t \geq n(\log n)^3$,
\begin{equation}\label{eq:prelim_coup}
\P\Ll[\xi^A_{t} \neq \underline{0},\;\xi^A_{t} \neq \xi^{\underline{1}}_{t}\Rr] < c_1 \quad \text{for all } A \neq \varnothing.
\end{equation} 
\end{lemma}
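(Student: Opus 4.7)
The plan is a two-phase strategy exploiting the \emph{lit} framework of Corollary \ref{cor.lit}. Starting from $A$, with probability bounded below by a universal constant, either the process dies by time $t$, or some well-chosen substructure $F \subset G$ becomes lit; once $F$ is lit under both $\xi^A$ and $\xi^{\underline 1}$, the segment/star coupling estimates \eqref{eq:2path_line}--\eqref{eq:2path_star} will force $\xi^A_t = \xi^{\underline 1}_t$ with high probability.

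First, using Lemma \ref{lem:split}(ii) iteratively, I would cover $G$ by subgraphs $F_1, \ldots, F_k$, each either a line segment or a star of size $m_i \ge C \log n$ for a large universal constant $C$, with $k = O(n/\log n)$ and consecutive pieces at bounded tree-distance. The dichotomy exploited is that any tree either has a vertex of degree $\ge \log n$ (giving a star) or, by iterating the centroid splitting, contains a path of length $\ge \log n / \log \log n$ (giving a segment). Since the cover spans $G$, any non-empty $A$ intersects some $F_{j_0}$. I would then show that with probability at least a universal constant, by time $T_1 = O(\log n)$, either $\xi^A_{T_1} = \underline 0$ or $F_{j_0}$ is lit under $\xi^A_{T_1}$: this follows from Corollary \ref{cor.lit}(iii) applied to the contact process on $F_{j_0}$ in isolation, which is stochastically dominated by the restriction of $\xi^A$ to $F_{j_0}$ via the Harris coupling. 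Simultaneously, Corollary \ref{cor.lit}(ii) gives that under $\xi^{\underline 1}$ every $F_j$ is lit throughout the window $[T_1, t]$ with overwhelming probability.

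Finally, from a state in which $F_{j_0}$ is lit under $\xi^A$ and every $F_j$ is lit under $\xi^{\underline 1}$, I would propagate the lit status under $\xi^A$ from $F_{j_0}$ to each other $F_j$ by chaining along the covering's neighbor graph, applying the Markov property at well-chosen stopping times and Corollary \ref{cor.lit}(iii) at each step. Once every $F_j$ is lit under both processes, the segment/star coupling estimates \eqref{eq:2path_line}--\eqref{eq:2path_star}, applied separately on each $F_j$ and summed via a union bound of the form $k \cdot e^{-c_0 m_j} \le (n/\log n) \cdot n^{-c_0 C}$ (summable for $C$ sufficiently large), together with attractiveness, force $\xi^A_t = \xi^{\underline 1}_t$ within the remaining budget. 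The total time used is $O(n (\log n)^2)$, comfortably below $n(\log n)^3$; the extra logarithmic slack is precisely what allows the union bound across the $O(n/\log n)$ pieces to close.

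The main obstacle is the final propagation-and-coupling step: turning ``every $F_j$ lit under both processes'' into the pointwise identity $\xi^A_t = \xi^{\underline 1}_t$. The lit events for different $F_j$'s are driven by the same Harris system and are hence not independent, so the argument must be written as a nested sequence of Markov-property applications at carefully chosen stopping times; and the vertices lying in $G \setminus \bigcup F_j$, together with the short paths connecting neighboring substructures, must be handled separately by a direct graphical-construction argument leveraging the tree structure.
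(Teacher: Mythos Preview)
There is a genuine conceptual gap in your final step. Being ``lit'' and the estimates \eqref{eq:2path_line}, \eqref{eq:2path_star} concern the contact process \emph{restricted to} $F_j$, not the restriction to $F_j$ of the process on $G$. Even if every $F_j$ is lit under $\xi^A$, you only learn that the $F_j$-process started from $\xi^A|_{F_j}$ couples with the $F_j$-process started from $\underline 1|_{F_j}$; by attractiveness this gives a lower bound on $\xi^A_t|_{F_j}$, but $\xi^{\underline 1}_t|_{F_j}$ can be strictly larger than the $F_j$-full-process because of infections entering $F_j$ from outside. So the implication ``all $F_j$ lit under both processes $\Rightarrow \xi^A_t = \xi^{\underline 1}_t$'' simply does not follow, and no amount of union-bounding across the $F_j$ repairs this. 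There are secondary issues as well: your covering has inconsistent size requirements ($m_i \ge C\log n$ versus paths of length $\log n/\log\log n$), and propagating the lit status across $O(n/\log n)$ pieces by chaining Corollary~\ref{cor.lit}(iii), each succeeding only with a fixed constant probability, is not shown to close with uniformly positive probability in the available time.

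The paper's argument avoids all of this by working with a \emph{single} subgraph $G_0$ (a star or segment) satisfying $|G_0| \ge \max(\sqrt{\log n},\mathrm{diam}(G))$; the diameter lower bound is what lets any surviving infection reach $G_0$ in time $O(|G_0|)$ with uniformly positive probability. One then partitions $[0,s_K]$ into $K \asymp (\log n)^2$ blocks and shows, via a union bound over all $x\in G$, that with probability at least $1/2$ the following holds simultaneously: every surviving forward path from $(x,0)$ visits $G_0$ in at least $3K/4$ blocks, every surviving dual path ending at $(y,s_K)$ visits $G_0$ in at least $3K/4$ blocks, and the ``merge'' event $E_k^{G_0}$ (any two infection paths inside $G_0$ over block $k$ can be concatenated) holds in at least $3K/4$ blocks. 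Pigeonhole then yields a common block where all three events occur, producing an infection path $(x,0)\leftrightarrow (y,s_K)$ for every relevant pair and hence $\xi^A_{s_K} = \xi^{\underline 1}_{s_K}$. The forward--dual symmetry through a single bottleneck is the idea your proposal is missing.
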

\begin{proof}
It is sufficient to find $c_1$ such that \eqref{eq:prelim_coup} holds for $n$ large enough, as we can then make $c_1$ approach 1, if necessary, to take care of the remaining values of $n$.

\noindent If $|G| = n$, then $G$ necessarily has a subgraph $G_0$ which is either a star or a line segment and satisfies
$$|G_0| \geq \max\left( \sqrt{\log n},\; \text{diam}(G)\right).$$
Let $\bar{c} = c_0\cdot \tilde c_{0}$, and
$$t_1 = \frac{16\cdot |G_0|}{\bar{c}^2} ,\qquad t_2 = t_1 + \frac{|G_0|}{c_{0}},\qquad t_3 = t_2 + \frac{16\cdot |G_0|}{\bar{c}^2}.$$ 
Fix an arbitrary nonempty subset $A$ of  $G$. Note that, by \eqref{eq:connect_segment} and \eqref{eq:litx},
$$\P\Ll[G_0 \text{ lit in }  \xi^A_{2|G_0|/\bar{c}} \Rr] > c_{\text{line}}\cdot (\tilde c_{0}-4e^{- c_{0}\cdot n})\ge \bar{c},$$
when $n$ is large enough. 
Then, by the Markov property, we also have
$$\P\Ll[\xi^A_{t_1} \neq \underline{0},\; G_0 \text{ not lit in } \xi^A_t \text{ for any } t \leq t_1\Rr] \leq (1-\bar{c})^{\left\lfloor \frac{t_1}{2|G_0|/\bar{c}}\right\rfloor}.$$
By definition of being lit for a configuration, we then get
\begin{equation}\label{eq:bound_A}
\P\Ll[\begin{array}{c}\xi^A_{t_2} \neq \underline{0},\; \nexists \{y,z\} \subset G_0 \text{ such that } \\[.2cm]A\times\{0\} \leftrightarrow (y,t_1) \stackrel{G_0}{\leftrightarrow} (z,t_2)\end{array}\Rr]\\
\leq (1-\bar{c})^{\left\lfloor \frac{t_1}{2|G_0|/\bar{c}}\right\rfloor} + e^{-c_{0}\cdot |G_0|}<\frac{1}{256},
\end{equation}
when $n$ is large enough.

\noindent Let now $K = \lfloor (\log n)^2 \rfloor$ and define the times
$$ s_k = t_3 \cdot k,\quad s_k' = s_k + t_1,\quad s_k'' = s_k + t_2,\quad k \in \{0,1,\ldots,K\}.$$
Define also the events
\begin{align*}
&E^{G_0}_k = \left\{\begin{array}{c}\text{for all } x,y,z,w \in G_0 \text{ with } (x, s_k') \stackrel{G_0}{\leftrightarrow} (y, s_k'') \text { and }\\[.2cm] (z, s_k') \stackrel{G_0}{\leftrightarrow} (w, s_k''),\text{ we have } (x,s_k') \stackrel{G_0}{\leftrightarrow} (w,s_k'') \end{array}\right\} \qquad k\leq K,\\[.2cm]
&E_k^x = \left\{\text{for some } y \in G_0,\; (x,0) \leftrightarrow (y,s_k')  \stackrel{G_0}{\leftrightarrow} G_0 \times \{s_k''\} \right\},\\[.2cm]
&\hat E^x_k =\left\{\text{for some } y \in G_0,\; G_0 \times \{s_{k}'\} \stackrel{G_0}{\leftrightarrow} (y,s_{k}'') \leftrightarrow (x,s_K)\right\} \qquad 0\leq k\leq K-1,\; x \in G.
\end{align*}
For any $x \in G$ and $k_1,\ldots, k_m$ with $0\leq k_1 < \cdots < k_m \leq K-1$, we have 
$$\begin{aligned}&\P\Ll[\{\xi^x_{s_K} \neq \underline{0}\} \cap \bigcap_{j=1}^m(E^x_{k_j})^c \Rr] \leq \P\Ll[\{\xi^x_{s_{k_m + 1}} \neq \underline{0}\} \cap \bigcap_{j=1}^m(E^x_{k_j})^c \Rr] \\[.2cm]
&\leq \sum_{A \neq \varnothing} \P\Ll[\{\xi^x_{s_{k_m}} = A\} \cap \bigcap_{j=1}^{m-1}(E^x_{k_j})^c \Rr] \cdot \P\Ll[\begin{array}{c}\xi^A_{t_2} \neq \underline{0},\; \nexists \{y,z\} \subset G_0 \text{ such that } \\[.2cm]A\times\{0\} \leftrightarrow (y,t_1) \stackrel{G_0}{\leftrightarrow} (z,t_2)\end{array}\Rr] \\[.2cm]
&\stackrel{\eqref{eq:bound_A}}{\leq} \frac{1}{256}\cdot \P\Ll[\{\xi^x_{s_{k_m}} \neq \underline{0}\}\cap \bigcap_{j=1}^{m-1}(E^x_{k_j})^c \Rr].
\end{aligned}$$
Iterating, we get
\begin{equation*}\P\Ll[\{\xi^x_{s_K} \neq \underline{0}\} \cap \bigcap_{j=1}^m(E^x_{k_j})^c \Rr] \leq \left(\frac{1}{256}\right)^m.\end{equation*}
Thus,
\begin{align}\begin{split}
\P\Ll[\bigcup_{x\in G} \left\{\xi^x_{s_K} \neq \underline{0},\;\sum_{k=0}^{K-1} \mathds{1}_{(E^x_k)^c} > K/4\right\}\Rr] \leq \frac{n\cdot  |\{I \subset \{0,\ldots, K-1\}: |I| = \frac{K}{4}\}|}{256^{K/4}} \\ < \frac{n\cdot 2^K}{256^{K/4}} = \frac{n}{2^K}.\end{split}
\label{eq:Esbound}\end{align}
Similarly,
\begin{equation}\label{eq:hatEsbound}
\P\Ll[\bigcup_{x\in G} \left\{G \times \{0\} \leftrightarrow (x,s_K),\;\sum_{k=0}^{K-1} \mathds{1}_{(\hat E^x_k)^c} > K/4\right\}\Rr] <  \frac{n}{2^K}.
\end{equation}
Then by \eqref{eq:2path_line} and \eqref{eq:2path_star}, we get 
\begin{equation}\label{eq:Fsbound}
\P\Ll[ \sum_{k=0}^{K-1} \mathds{1}_{(E^{G_0}_k)^c}> K/4\Rr] \leq 2^K (e^{-\bar{c}|G_0|})^{K/4}.
\end{equation}
Now defining
$$\begin{aligned}&E^x = \left\{\xi^x_{s_K} = \underline{0}\right\} \cup \left\{\xi^x_{s_K} \neq \underline{0},\;\sum_{k=0}^{K-1} \mathds{1}_{(E^x_k)^c} \leq K/4\right\},\\[.2cm]
&\hat E^x = \left\{G \times \{0\} \nleftrightarrow (x,s_K)\right\} \cup   \left\{G \times \{0\} \leftrightarrow (x,s_K),\;\sum_{k=0}^{K-1} \mathds{1}_{(\hat E^x_k)^c} \leq  K/4\right\}\qquad x \in G,\\[.2cm]
&E^{G_0} = \left\{\sum_{k=0}^{K-1} \mathds{1}_{(E^{G_0}_k)^c}\leq K/4\right\},
\end{aligned}$$
we see that \eqref{eq:Esbound}, \eqref{eq:hatEsbound} and \eqref{eq:Fsbound} imply that there exists some $n_0 \in \N$ such that, if $n \geq n_0$, 
$$\P\Ll[\left(\bigcup_{x\in G} (E^x \cap \hat E^x)^c\right) \cup (E^{G_0})^c \Rr] < \frac12.$$
We claim now that, for any $A \neq \varnothing$,
$$\bigcap_{x \in G} (E^x \cap \hat E^x) \cap E^{G_0} \subseteq\{\xi^A_{s_K} = \underline{0}\} \cup\{\xi^A_{s_K} = \xi^{\underline{1}}_{s_K}\}.$$
Indeed, assume that the event on the left-hand side occurs and $\xi^A_{s_K} \neq \underline{0}$. Then, there exists $x \in A$ such that $\xi^{x}_{s_K} \neq \underline{0}$. Fix $y$ such that $\xi^{\underline{1}}_{s_K}(y) = 1$, that is, $G \times \{0\} \leftrightarrow (y,s_K)$. Since by assumption
$$\sum_{k=0}^{K-1} \mathds{1}_{(E^{x}_k)^c} \leq \frac{K}{4},\quad \sum_{k=0}^{K-1} \mathds{1}_{(\hat E^{ y}_k)^c} \leq \frac{K}{4},\quad \sum_{k=0}^{K-1} \mathds{1}_{(E^{G_0})^c} \leq \frac{K}{4},$$
there exists $k^*$ such that $E^{x}_{k^*}$, $\hat E^{y}_{k^*}$ and $E^{G_0}_{k^*}$ all occur. We then have, for some $x', x'',y',y'' \in G_0$,
$$( x,0)\leftrightarrow (x', s_{k^*}') \stackrel{G_0}{\leftrightarrow} (x'',s_{k^*}''),\qquad (y',s_{k^*}') \stackrel{G_0}{\leftrightarrow}(y'', s_{k^*}'') \leftrightarrow (y,s_K).$$
Thus 
$$(x,0) \leftrightarrow (x', s_{k^*}') \leftrightarrow (y'', s_{k^*}'') \leftrightarrow (y,s_K),$$
and therefore $\xi^A_{s_K}(y) \geq \xi^x_{s_K}( y) = 1$. This proves that $\xi^A_{s_K} = \xi^{\underline{1}}_{s_K}$.

\vspace{0.2cm}
\noindent Finally, to obtain the expression \eqref{eq:prelim_coup}, note that for $n$ large enough we have $n(\log n)^3 > s_K$, so that, for any $t\geq n(\log n)^3$,
\begin{equation*}\P\Ll[ \xi^A_{t} \neq \underline{0},\;\xi^A_{t} \neq \xi^{\underline{1}}_{t}\Rr] \leq \P\Ll[ \xi^A_{s_K} \neq \underline{0},\;\xi^A_{s_K} \neq \xi^{\underline{1}}_{s_K}\Rr].\end{equation*}
\end{proof}

We now give an important application of Proposition \ref{prop:prelim_coup}, 
which says that whenever we cut a tree into disjoints connected subtrees, 
a lower bound on the mean extinction time on the original tree is obtained by taking the product of the mean extinction times on the subtrees, up to some correction factor. The latter is negligible as long as the number of pieces in the decomposition of 
the tree is not too large. Note that a similar result was proved in \cite{CMMV13}. 

\begin{proposition}
\label{lem:induction}
There exists a constant $c_\text{split} > 0$ such that, for any tree $G$ containing $N$ connected and disjoint subtrees $G_1,\ldots, G_N$,
\begin{equation}\label{eq:lemind}
\E\Ll[\uptau_G\Rr] \geq \frac{c_\text{split}}{(2|G|^3)^{N+1}} \cdot \prod_{i=1}^N \E\Ll[\uptau_{G_i}\Rr].
\end{equation}
\end{proposition}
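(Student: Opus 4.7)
My plan is to prove the proposition by induction on $N$. The base case $N=1$ is immediate from monotonicity: the contact process on $G_1$ constructed from the Harris system of $G$ (restricted to arrivals on vertices and edges inside $G_1$) is pointwise dominated by the $G$-process started from $\underline{1}_G$, hence $\E[\uptau_G] \geq \E[\uptau_{G_1}]$, which is stronger than the required $c_{\text{split}}/(2|G|^3)^{2}\cdot \E[\uptau_{G_1}]$ once $c_{\text{split}}$ is chosen small enough (using $|G|\geq 2$).

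For the inductive step from $N-1$ to $N$, I would peel off one subtree, say $G_N$, and aim for the single-split inequality
$$
\E[\uptau_G] \;\geq\; \frac{1}{2|G|^3}\,\E[\uptau_{G_N}]\,\E[\uptau_{G'}],
$$
where $G'$ is a tree containing $G_1,\ldots,G_{N-1}$ as disjoint subtrees. A convenient choice is to let $G'$ be the tree obtained from $G$ by contracting all vertices of $G_N$ to a single point: this preserves the tree structure, keeps the other $G_i$'s as disjoint subtrees, and satisfies $|G'|\leq|G|$. Iterating the single-split inequality $N-1$ times and invoking the induction hypothesis on $G'$ telescopes the prefactor $1/(2|G|^3)$ into the required $1/(2|G|^3)^{N+1}$ (the factor of $2$ in the prefactor is precisely what is needed to absorb the telescoping loss).

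The heart of the argument is the single-split inequality. The guiding intuition is that during the lifetime of the $G'$-restriction, the $G_N$-restriction repeatedly goes through many alive-dead-reignition cycles, each of expected length $\sim \E[\uptau_{G_N}]$ by the Markov property combined with the coupling in Proposition \ref{prop:prelim_coup}. My plan is to combine Proposition \ref{prop:prelim_coup} applied to $G_N$ — which guarantees that whenever the $G_N$-restriction is alive, it is (with high probability) coupled to the $G_N$-process from $\underline{1}_{G_N}$, so the next attempt at extinction takes expected time $\geq \E[\uptau_{G_N}]$ — with Proposition \ref{prop:prelim_coup} applied to $G'$ and Lemma \ref{lem:attract}(i), which controls how quickly the $G'$-restriction can go extinct. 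The reignition of $G_N$ between consecutive cycles happens in $O(|G|)$ time because the infection path from the still-alive $G'$-restriction to $G_N$ has length at most $\mathrm{diam}(G)\leq|G|$, and by supercriticality of the line segment (Lemma \ref{lem:infest_segment}) such a crossing succeeds with probability bounded below in time $O(|G|)$.

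The main obstacle is making the product (rather than the maximum or sum) structure rigorous. A one-shot Markov argument at the first death time $\sigma$ of the $G_N$-restriction only yields $\E[\uptau_G]\geq \E[\sigma]+\E[\text{residual}]\gtrsim \E[\uptau_{G_N}]+\E[\uptau_{G'}]$, and since the two restrictions are positively correlated by FKG, one cannot directly bound the joint-extinction probability by the product of marginals either. The product must come from counting the many reignition cycles of $G_N$ that fit inside one lifetime of $G'$; this requires a careful renewal argument that iterates the Markov-plus-coupling step along a grid of time slots of length $\sim |G|(\log|G|)^3$, with a final conversion from slot-probabilities to expected times via \eqref{eq:attract_geom}. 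The polynomial penalty $|G|^3$ then naturally absorbs the $|G|\log^3|G|$ coupling time from Proposition \ref{prop:prelim_coup}, the $O(|G|)$ reignition time, and the union bound over the slot boundaries.
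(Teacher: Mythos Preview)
Your approach has a genuine gap, and it also misses the key observation that makes the paper's direct (non-inductive) argument work.

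First, the contraction step is not sound. The graph $G'$ you form by collapsing $G_N$ to a point is \emph{not} a subgraph of $G$, so there is no such thing as ``the $G'$-restriction'' of the Harris system on $G$, and no evident way to compare $\E[\uptau_G]$ with $\E[\uptau_{G'}]$. Your renewal picture speaks of ``the lifetime of the $G'$-restriction'' as if it were a subprocess of the $G$-process; it is not. Any attempt to replace $G'$ by an honest subtree of $G$ containing $G_1,\ldots,G_{N-1}$ and disjoint from $G_N$ runs into the problem that removing $G_N$ from a tree typically disconnects the remaining $G_i$'s, so no such subtree exists in general.

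Second, your diagnosis that ``the two restrictions are positively correlated by FKG, so one cannot directly bound the joint-extinction probability by the product of marginals'' is the wrong way to look at it, and this is exactly where the paper's proof is simpler. The processes $\big(\xi^{\underline 1}_t\big)$ \emph{restricted to use only the Harris system inside each $G_i$} are genuinely \emph{independent}, because the $G_i$'s are disjoint and so use disjoint Poisson clocks. Hence the event ``every $G_i$-restricted process dies in $[sk,s(k+2)]$'' has probability exactly $\prod_i \P[\uptau_{G_i}\le 2s]\le \prod_i \frac{2s}{\E[\uptau_{G_i}]}$ by \eqref{eq:attract_geom}. This delivers the product structure in one line, with no induction and no peeling.

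The paper therefore argues directly for all $N$ subtrees at once: on each block $[sk,s(k+1)]$ it asks for (i) at least one $G_i$-restricted process to survive across $[sk,s(k+2)]$ (event $E_k$, bounded via the independence above), and (ii) the coupling event of Proposition~\ref{prop:prelim_coup} on all of $G$ (event $F_k$). One checks $\bigcap_k(E_k\cap F_k)\subset\{\uptau_G>s(K+1)\}$, takes $s=|G|^3$, and reads off \eqref{eq:lemind}. The time-slot grid and the coupling lemma that you correctly identify as the engine are already the whole proof; the induction and the contraction add nothing and, as written, do not go through.
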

\begin{remark}
{\em By using simply Lemma \ref{lem:attract} and no coupling argument we could have directly obtained a much weaker version of this result, namely 
$$\E[\uptau_G] \ge \frac 12 \left( \frac12 \prod_{i=1}^N \E[\uptau_{G_i}]\right)^{1/N};$$
 this would have been insufficient for the applications we have in mind. }
\end{remark}
\begin{proof}
Fix $s > 0$ and define the events
\begin{align*}
&E_k = \bigcup_{i=1}^N \{ G_i \times \{sk\} \stackrel{G_i}{\longleftrightarrow} G_i \times \{s(k+2)\}\},\\
&F_k = \left\{\begin{array}{l}\text{for all } x,y,z,w \in G \text{ with } (x,sk) \leftrightarrow (y,s(k+1)) \text{ and }\\(z,sk) \leftrightarrow (w,s(k+1)), \text{ we have } (x,sk) \leftrightarrow (w,s(k+1)) \end{array}\right\} \quad k \in \{0,1,\ldots\}.
\end{align*}
It is readily seen that
$$\bigcap_{k=0}^K (E_k \cap F_k) \subseteq \left\{ \xi^{\underline{1}}_{s(K+1)} \neq \underline{0}\right\}.$$
By \eqref{eq:attract_geom} and  Proposition \ref{prop:prelim_coup},
\begin{align}\label{remem1}&\P\Ll[E_k^c\Rr]\leq \frac{(2s)^N}{\prod_{i=1}^N \mathbb{E}[\uptau_{G_i}]},\\[.2cm]&  \label{remem2}\P\Ll[F_k^c\Rr]\leq \sum_{x \in G} \P\Ll[\xi^x_s \neq \varnothing,\;\xi^x_s \neq \xi^{\underline{1}}_s \Rr] \leq |G|\cdot \exp \left\{ -c_\text{coup} \cdot \left\lfloor \frac{s}{|G|(\log |G|)^3} \right \rfloor\right\}.
\end{align}
Then, for any $t > s$,
\begin{equation} \label{eq:split_main}\P\Ll[\uptau_G \leq t\Rr] \leq \left\lceil\frac{t}{s}\right \rceil \left( \frac{(2s)^N}{\prod_{i=1}^N \E\Ll[\uptau_{G_i}\Rr]} + |G|\cdot \exp \left\{ -c_\text{coup} \cdot \left\lfloor \frac{s}{|G|(\log |G|)^3} \right \rfloor\right\}\Rr).\end{equation}
Let $s = |G|^3$ and $t = \frac{1}{(2|G|^3)^N}\cdot  \prod_{i=1}^N \mathbb{E}[\uptau_{G_i}]$. In case we have $t \leq s$, then also
$$\frac{\prod_{i=1}^N \E[\uptau_{G_i}]}{(2|G|^3)^{N+1}} \leq \frac{t}{s} \leq 1,$$
so \eqref{eq:lemind} holds trivially, since $\E\left[ \uptau_G\right] \geq 1$ for any graph $G$. Now, if $t > s$, using the inequality
$$\prod_{i=1}^N \E[\uptau_{G_i}]  \stackrel{\eqref{eq:upper_bound_E}}{\leq} e^{(2\lambda + 1)|G|},$$
we see that the right-hand side of \eqref{eq:split_main} is smaller than $1/2$ when $|G|$ is large enough. This proves the result for $|G|$ large enough, with $c_\text{split} = 1/2$. We can then reduce the value of $c_\text{split}$ to take care of the remaining cases.
\end{proof}

We will encounter situations in which the above proposition is not useful because the sets $G_1,\ldots, G_N$ are too small compared to $G$, so that the denominator on the right-hand side of \eqref{eq:lemind} is too large compared to the numerator. In case we can guarantee that the distances between the $G_i$'s are not too large, the following can then be valuable.
\begin{proposition}\label{prop:new_bound}
If $G$ is a tree containing $N$ disjoint connected subtrees $G_1,\ldots, G_N$ and $0 < s < t$,
\begin{equation}\label{eq:new_bound}
\P[\uptau_G \leq t] \leq \left \lceil \frac{t}{s} \right \rceil \cdot \left( \frac{(2s)^N}{\prod_{i=1}^N \E[\uptau_{G_i}]} + \sum_{1 \leq i < j \leq N}\sigma_{i,j} \cdot \exp\left\{-c_\text{coup} \cdot \left \lfloor \frac{s}{\sigma_{i,j} (\log \sigma_{i,j})^3}\right \rfloor\right\} \right),
\end{equation}
where $\sigma_{i,j} = |G_i| + |G_j| + \mathrm{dist}(G_i,G_j) - 1$.
\end{proposition}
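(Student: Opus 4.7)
The plan is to adapt the proof of Proposition \ref{lem:induction}, replacing the single global coupling event on $G$ by local coupling events, one per pair of subtrees. For each $1\le i<j\le N$, let $H_{i,j}$ denote the unique subtree of $G$ formed by $G_i$, $G_j$, and the path in $G$ joining them, so that $|H_{i,j}|=\sigma_{i,j}$.

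First I would keep the persistence event
$$E_k=\bigcup_{i=1}^N\left\{G_i\times\{sk\}\stackrel{G_i}{\leftrightarrow}G_i\times\{s(k+2)\}\right\}$$
exactly as in Proposition \ref{lem:induction}, so that Lemma \ref{lem:attract}(i) combined with the independence of the Harris systems on the disjoint $G_i$ yields $\P[E_k^c]\le(2s)^N/\prod_{i=1}^N\E[\uptau_{G_i}]$. For each pair $i<j$ I would then introduce the local coupling event
$$F_k^{i,j}=\left\{\begin{array}{l}\text{for all }x,y,z,w\in H_{i,j}\text{ with }(x,sk)\stackrel{H_{i,j}}{\leftrightarrow}(y,s(k+1))\\ \text{and }(z,sk)\stackrel{H_{i,j}}{\leftrightarrow}(w,s(k+1)),\text{ we have }(x,sk)\stackrel{H_{i,j}}{\leftrightarrow}(w,s(k+1))\end{array}\right\}.$$
Since the Harris marks on $H_{i,j}$ realize the contact process on the subtree $H_{i,j}$, applying Proposition \ref{prop:prelim_coup} to $H_{i,j}$ and summing over starting vertices $x\in H_{i,j}$ gives $\P[(F_k^{i,j})^c]\le\sigma_{i,j}\exp\{-c_{\text{coup}}\lfloor s/(\sigma_{i,j}(\log\sigma_{i,j})^3)\rfloor\}$, which matches the contribution of the pair $(i,j)$ in the target bound \eqref{eq:new_bound}.

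Next I would verify the deterministic inclusion
$$\bigcap_{k=0}^{K}\Big(E_k\cap\bigcap_{1\le i<j\le N}F_k^{i,j}\Big)\subseteq\{\uptau_G>s(K+1)\},\qquad K=\lceil t/s\rceil-1,$$
by exactly the same chaining argument as in Proposition \ref{lem:induction}: given the persistence path inside some $G_{i_k}$ provided by $E_k$, the pairwise event $F_k^{i_k,i_{k+1}}$, which concerns paths inside $H_{i_k,i_{k+1}}\supset G_{i_k}\cup G_{i_{k+1}}$, allows one to merge with the persistence path from $E_{k+1}$ inside $G_{i_{k+1}}$. Iterating produces survival up to time $s(K+1)$, and a union bound over $k\in\{0,\ldots,K\}$ and the pairs $(i,j)$ delivers \eqref{eq:new_bound}.

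The main obstacle I anticipate is this deterministic inclusion: Proposition \ref{lem:induction} invokes a single coupling event on all of $G$ that simultaneously matches any pair of survivors, whereas here couplings are available only inside each $H_{i,j}$. I must therefore check that every consecutive pair $(i_k,i_{k+1})$ arising in the chaining is handled by one of the events $F_k^{i,j}$, and that all infection paths invoked actually lie inside the relevant $H_{i,j}$. Both points are in fact automatic: the paths from $E_k$ are contained in a single $G_{i_k}$, and $G_i\cup G_j\subset H_{i,j}$ by construction, so restricting the coupling to $H_{i,j}$ loses nothing in the chaining.
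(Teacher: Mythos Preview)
Your proposal is correct and is essentially identical to the paper's proof: the paper defines the same pairwise subtrees (there called $G_{i,j}$ rather than $H_{i,j}$), keeps the same $E_k$, and replaces the global coupling event $F_k$ of Proposition~\ref{lem:induction} by the intersection $\tilde F_k=\bigcap_{i<j}F_k^{i,j}$ of exactly your local coupling events, after which the inclusion and the union bound are as you describe. The only slip is an index: the overlap of the $E_k$-path and the $E_{k+1}$-path is the time interval $[s(k+1),s(k+2)]$, so the relevant coupling event is $F_{k+1}^{\,i_k,i_{k+1}}$ rather than $F_k^{\,i_k,i_{k+1}}$, which does not affect the argument.
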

\begin{proof}
For each distinct $i$ and $j$, define $G_{i,j}$ as the connected graph obtained as the union of $G_i$, $G_j$ and the shortest path between $G_i$ and $G_j$. Note that $|G_{i,j}| = \sigma_{i,j}$. For each $k\in\{0,1,\ldots\}$, define $E_k$ exactly as in the proof of Proposition \ref{lem:induction}, and define
$$\tilde F_k = \bigcap_{1 \leq i < j \leq N}  \left\{\begin{array}{l}\text{for all } x,y,z,w \in G_{i,j} \text{ with } (x,sk) \stackrel{G_{i,j}}{\leftrightarrow} (y,s(k+1)) \text{ and }\\[.2cm](z,sk) \stackrel{G_{i,j}}{\leftrightarrow} (w,s(k+1)), \text{ we have } (x,sk) \stackrel{G_{i,j}}{\leftrightarrow} (w,s(k+1)) \end{array}\right\}.$$
Then, $$\bigcap_{k=0}^K (E_k \cap \tilde F_k) \subseteq \left\{ \xi^{\underline{1}}_{s(K+1)} \neq \underline{0}\right\},$$
so the desired inequality follows from bounding as in \eqref{remem1} and \eqref{remem2}.
\end{proof}

\section{Proofs of main results}
\subsection{Level 1: a polynomial lower bound}
\label{sec.lev1}
\begin{proposition}
\label{prop:lev1} There exists $n_1 \in \mathbb{N}$ such that, if $n \geq n_1$ and $G$ is a tree with $n$ vertices, then
$\mathbb{E}\left[\uptau_G\right] \geq n^{12}.$
\end{proposition}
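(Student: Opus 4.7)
My plan is to prove the lower bound by strong induction on the number of vertices $n$, using a case split based on whether $T$ contains a sufficiently large line segment or star subgraph. Let $c_0$ be the constant from Corollary \ref{cor.lit}, and set $C = 5/c_0$; recall that $\min(c_\text{line},c_\text{star}) \ge 3c_0$ by the definition of $c_0$. Fix $n_1$ large, to be tuned at the end.

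In Case A, $T$ contains a subgraph $G_0$ that is a star or a line segment of size at least $C \log n$. Since the contact process on $T$ stochastically dominates the contact process restricted to $G_0$ (both started from full occupancy of $G_0$), combining monotonicity with \eqref{eq:exp_line}/\eqref{eq:exp_star} gives
\[
\E[\uptau_T] \;\geq\; \E[\uptau_{G_0}] \;\geq\; e^{3 c_0 \cdot C \log n} \;=\; n^{15} \;\geq\; n^{12}.
\]
In the complementary Case B, $\Delta(T) < C \log n$, so by Lemma \ref{lem:split}(i) (applied with $d = \lfloor C \log n\rfloor$) there is an edge whose removal splits $T$ into subtrees $T_1, T_2$ with $|T_i| \ge n/(C\log n)$. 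Provided both $|T_1|,|T_2| \ge n_1$, the inductive hypothesis gives $\E[\uptau_{T_i}] \ge |T_i|^{12}$, and Proposition \ref{lem:induction} with $N=2$ yields
\[
\E[\uptau_T] \;\ge\; \frac{c_\text{split}}{(2n^3)^3}\left(\frac{n}{C \log n}\right)^{\!\!24} \;=\; \frac{c_\text{split}}{8\,C^{24}} \cdot \frac{n^{15}}{(\log n)^{24}} \;\ge\; n^{12}
\]
for $n$ sufficiently large, thanks to the comfortable slack coming from the factor $n^{15}$. Note that in Case A I avoid paying the $1/n^6$ correction of Proposition \ref{lem:induction} by invoking monotonicity directly; this is what buys me enough exponent room with $C$ just slightly above $4/c_0$.

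The main obstacle is the base case and the sub-regime of Case B where $n/(C \log n) < n_1$, i.e.\ $n_1 \le n \lesssim n_1 \log n_1$. In this range $n^{12}$ is just a constant depending on $n_1$, yet the naive single-line-or-star bound is insufficient: any tree with $n$ vertices contains a line or star of size only $\Theta(\log n / \log\log n)$, which yields a factor of order $n^{3c_0/\log\log n}$, too small once divided by the $n^6$ correction. To close this gap I would exploit the fact that in the problematic regime $T$ has both $\Delta(T)$ and $\mathrm{diam}(T)$ of order $\log n$, so the distance parameters $\sigma_{i,j}$ of Proposition \ref{prop:new_bound} remain of order $O(n) = O(n_1 \log n_1)$ rather than growing in the whole graph size. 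Applying Proposition \ref{prop:new_bound} to a decomposition of $T$ into many disjoint pieces, each containing a line/star of moderate size, should yield an improved bound; with the right choice of the number of pieces and $n_1$ large enough, this covers the base-case gap and closes the induction.
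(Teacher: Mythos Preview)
Your inductive scheme has a genuine structural gap at exactly the point you flag, and the gap cannot be closed without essentially reproducing the paper's argument --- which then makes the induction superfluous. The two-piece split of Case B via Proposition~\ref{lem:induction} only yields $\E[\uptau_T]\ge n^{12}$ once $n$ exceeds some absolute threshold $n_*$ \emph{and} both pieces have size $\ge n_1$. But whatever $n_1\ge n_*$ you choose, the range $n_1\le n\le C n_1\log n_1$ always falls through: there the pieces may have fewer than $n_1$ vertices, so the induction hypothesis is unavailable. The only bound you have without induction comes from a single line/star subgraph, and in Case B that subgraph has size at most $O(\log n/\log\log n)$, giving $\E[\uptau_T]\ge \exp\{o(\log n)\}$, far short of $n^{12}$. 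Bootstrapping this sub-polynomial bound via a \emph{two-piece} split and Proposition~\ref{lem:induction} is hopeless: the $(2n^3)^{-3}$ correction swamps the product $\exp\{O(\sqrt{\log n})\}$.

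Your own sketch for the base case --- many disjoint pieces, each containing a moderate line/star, combined via Proposition~\ref{prop:new_bound} --- is exactly the paper's proof: it takes $N=\lfloor(\log n)^{3/4}\rfloor$ pieces (obtained by repeated application of Lemma~\ref{lem:split}(i)), inside each a line/star of size $\lfloor\sqrt{\log n}\rfloor$, and crucially uses $\sigma_{i,j}\le 2C\log n$ (not merely $O(n)$ as you write; the bound $O(n)$ would force $s\gg n(\log n)^3$ and then $(2s)^N$ would dominate). With $s=(\log n)^3$ and $t=2n^{12}$ the right-hand side of \eqref{eq:new_bound} is $<1/2$ for large $n$, directly giving $\E[\uptau_G]\ge n^{12}$ for \emph{all} $n\ge n_1$. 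Once this direct argument is in place, there is nothing left for the induction to do; the whole of Case B becomes redundant. In short: the part of your proof that works (Case A and the completed version of your base-case sketch) is already the full proof, and the inductive layer on top contributes nothing.
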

\begin{proof}
\noindent Let $C = 4/c_{0}$. If $G$ contains a star graph or a line segment of size larger than $C \log n$, then \eqref{eq:exp_line} and \eqref{eq:exp_star} imply that $\E[\uptau_G]\ge n^{12}$. 

\noindent Assume that both the maximum degree and the diameter of $G$ are smaller than $C\log n$. Using Lemma \ref{lem:split}, we can find two disjoint connected subgraphs $H_1, H_1'$ so that $$G = H_1 \cup H_1',\quad \quad|H_1| \geq \left \lfloor \frac{n}{C\log n} \right \rfloor \quad \text{ and } \quad |H_1'| \geq n/2.$$ Applying Lemma \ref{lem:split} again, we can further split $$H_1' = H_2 \cup H_2', \quad\quad  |H_2| \geq \left\lfloor \frac{n}{2C\log n} \right\rfloor  \quad \text{ and }\quad  |H_2'| \geq n/4.$$ By continuing this procedure for \begin{equation} \label{eq:choose_N} N := \lfloor (\log n)^{3/4} \rfloor \end{equation} times, we obtain disjoint connected subgraphs $H_1, \ldots, H_N$ with
$$|H_i| \geq \left\lfloor \frac{n}{2^{i-1}C\log n} \right \rfloor \geq \sqrt{n}\qquad  i=1,\ldots, N$$
(assuming $n$ is large enough).
Since each $H_i$ has both maximum degree and diameter smaller than $C \log n$, we can find subgraphs $G_i \subset H_i$ of size $\lfloor\sqrt{\log n}\rfloor $ which are either stars or line segments. By \eqref{eq:exp_line} and \eqref{eq:exp_star}, we have \begin{equation} \label{eq:bound_each}\E[\uptau_{G_i}] \geq \exp\{c_0 \sqrt{\log n}\} \text{ for each } i. \end{equation}

We now want to apply Proposition \ref{prop:new_bound} to $G$ and its subgraphs $G_1, \ldots, G_N$. Letting $\sigma_{i,j}$ be as in \eqref{prop:new_bound}, we have \begin{equation} \label{eq:bound_sigma}\sigma_{i,j} \leq 2\sqrt{\log n} + \mathrm{diam}(G) \leq 2C\log n,\end{equation}
so, letting $s = (\log n)^3$ and $t = 2n^{12}$ and using  \eqref{eq:choose_N}, \eqref{eq:bound_each} and \eqref{eq:bound_sigma}, the right-hand side of \eqref{eq:new_bound} is smaller than 
\begin{align*}
&\left \lceil \frac{2n^{12}}{(\log n)^3} \right \rceil \cdot \left( (2(\log n)^3)^{(\log n)^{3/4}}\cdot \exp\{-c_0 (\log n)^{5/4} \}\right. \\ &\hspace{3cm}\left. + (\log n)^{3/2}\cdot  2C\log n \cdot \exp\left\{-c_\text{coup} \cdot \left \lfloor \frac{(\log n)^3}{2C\log n \cdot (\log (2C\log n))^3}\right \rfloor\right\} \right),
\end{align*}
which is in turn smaller than $1/2$ when $n$ is large enough. 

\end{proof}

\subsection{Proof of Theorem \ref{thm:couple}}
\label{sec.couple}
According to Lemma A.1 in \cite{mmvy13} and Lemma \ref{lem:attract}, all we have to prove is that there exists a sequence $(a_n)$ such that $a_n = o(\E[\uptau_{G_n}])$ and for any $v\in G$, 
$$\P\Ll[\xi_{a_n}^v\neq \underline{0},\, \xi_{a_n}^v\neq \xi^{\underline{1}}_{a_n}\Rr] = o(1).$$
But this readily follows from Propositions \ref{prop:prelim_coup} and \ref{prop:lev1}.

\subsection{Level 2: a stretched exponential lower bound with exponent $1/3$}
\label{sec.lev2}
\begin{proposition}
\label{prop:lev2} There exists $n_2 \in \N$ such that, if $n \geq n_2$ and $G$ is a tree with $n$ vertices, then $\E[\uptau_G] > \exp\{c_0\cdot n^{1/3}\}$, where $c_0$ is as in Corollary \ref{cor.lit}.
\end{proposition}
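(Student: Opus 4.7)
The plan is to combine the polynomial lower bound of Proposition \ref{prop:lev1} with the multiplicative splitting estimate of Proposition \ref{lem:induction}, after first dispatching the case in which $G$ contains a ``large enough'' substructure. Throughout we assume $n$ is sufficiently large.

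\emph{Case 1.} If $G$ contains a subgraph isomorphic to a line segment on $\ge n^{1/3}$ vertices or a star graph on $\ge n^{1/3}$ vertices, then since restricting the Harris system realizes the contact process on any subgraph $H\subseteq G$ and yields $\uptau_H\le\uptau_G$, one gets from \eqref{eq:exp_line} and \eqref{eq:exp_star} that
$$\E[\uptau_G]\ \ge\ \exp\{\min(c_\text{line},c_\text{star})\cdot n^{1/3}\}\ =\ \exp\{3c_0\cdot n^{1/3}\},$$
which is more than we need. This covers all $G$ with maximum degree $\ge n^{1/3}$ (take a star at a vertex of maximum degree) or $\mathrm{diam}(G)\ge n^{1/3}$ (take a geodesic path).

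\emph{Case 2.} Otherwise, the maximum degree of $G$ and $\mathrm{diam}(G)$ are both $<n^{1/3}$. Setting $d=\lceil n^{1/3}\rceil$ and greedily applying Lemma \ref{lem:split}(i) to every intermediate piece of size $>n^{2/3}$, I would obtain a decomposition of $G$ into $N$ disjoint connected subtrees $G_1,\dots,G_N$ with
$$|G_i|\le n^{2/3}\qquad\text{and}\qquad |G_i|\ge\lfloor n^{2/3}/d\rfloor\ge n^{1/3}/3,$$
the lower bound holding because each terminal piece was carved by splitting a parent of size $>n^{2/3}$, and Lemma \ref{lem:split}(i) being applicable at each stage since subtrees of $G$ inherit the max-degree bound $d$. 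Since $\sum_i|G_i|=n$, this forces $N\ge n^{1/3}$. Proposition \ref{prop:lev1} applied to each piece (valid since $|G_i|\ge n^{1/3}/3\ge n_1$ for $n$ large) gives $\E[\uptau_{G_i}]\ge|G_i|^{12}\ge (n^{1/3}/3)^{12}$, and Proposition \ref{lem:induction} then yields
$$\E[\uptau_G]\ \ge\ \frac{c_\text{split}}{(2n^3)^{N+1}}\prod_{i=1}^N\E[\uptau_{G_i}]\ \ge\ \frac{c_\text{split}}{(2n^3)^{N+1}}\Big(\frac{n^{1/3}}{3}\Big)^{12N}.$$
Taking logarithms, the exponent is at least $N\log n - O(N)$, and since $N\ge n^{1/3}$ this comfortably dominates $c_0\, n^{1/3}$ for $n$ large (in fact one obtains the much stronger $\exp\{\Omega(n^{1/3}\log n)\}$).

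\emph{Main obstacle.} The crux is to choose $N$ and the piece sizes so that $\prod\E[\uptau_{G_i}]$ outpaces the ``entropy'' factor $(2n^3)^{N+1}$ of Proposition \ref{lem:induction}. A single split ($N=2$) would give only a polynomial bound in $n$, whereas pieces of bounded size would invalidate the use of Proposition \ref{prop:lev1}; the intermediate regime with pieces of size $\sim n^{2/3}$, hence $N\sim n^{1/3}$, is precisely the sweet spot where the polynomial bound $|G_i|^{12}$ produces a $\log n$ factor per piece that beats the $3\log n$ cost per piece from $(2n^3)^{N+1}$.
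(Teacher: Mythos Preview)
Your proof is correct and takes essentially the same approach as the paper: dispose of the case of a large star or segment directly, then in the remaining (bounded-degree, bounded-diameter) case use Lemma~\ref{lem:split}(i) to cut $G$ into roughly $n^{1/3}$ pieces each of size at least roughly $n^{1/3}$, apply Proposition~\ref{prop:lev1} to each piece, and combine via Proposition~\ref{lem:induction}. The only difference is the stopping rule for the splitting---you stop once every piece has size $\le n^{2/3}$, whereas the paper stops once it has exactly $N=\lfloor n^{1/3}\rfloor$ pieces---and this is purely cosmetic, leading to the same final arithmetic.
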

\begin{proof}
\noindent Let $N = \lfloor n^{1/3} \rfloor$. If $G$ contains a subgraph with more than $N$ vertices which is either a star graph  or a line segment, then \eqref{eq:exp_line} and \eqref{eq:exp_star} give the desired result.

\noindent Now assume that the maximum degree and diameter of $G$ are both bounded by $N$; we can then repeatedly split $G$ using Lemma \ref{lem:split} and obtain disjoint connected subgraphs $G_1,\ldots, G_N$, all with at least $N$ vertices. If $n$ is large enough that $N$ is larger than the constant $n_1$ of Proposition \ref{prop:lev1}, we have $\E[\uptau_{G_i}] \geq |G_i|^{12} \geq n^{4}/2$ for each $i$. Then, by Proposition \ref{lem:induction},
$$\mathbb{E}[\uptau_G] \geq \frac{c_\text{split}}{(2n^3)^{N+1}} \cdot \prod_{i=1}^N \mathbb{E}[\uptau_{G_i}] \geq \frac{c_\text{split}}{2^{2n^{1/3}}\cdot n^{3n^{1/3}+3}} \cdot (n^4/2)^{n^{1/3}} > e^{n^{1/3}},$$
if $n$ is large enough.
\end{proof}

\subsection{A new proof of Theorem \ref{thm:ext_bound} (ii)}
\label{sec.extbound}
In this subsection we fix some integer $d\ge 1$, and only consider graphs (in fact trees) with maximal degree bounded by $d$. Set for $r\ge 2$
$$\alpha_r:= \inf_{2\le |G|\le 2^r} \frac{\log \E[\uptau_G]}{|G|}.$$
All we have to prove is that $\alpha_r$ is bounded away from zero for $r$ large enough. 
So let $r\ge 2$ be given, and consider some graph $G$ with $2^r<|G|\le 2^{r+1}$. By using Lemma \ref{lem:split}, 
we can split $G$ in at most $d+1$ disjoint connected subgraphs of size at most $2^r$, at least if $r$ is large enough. So we can assume 
that there is a decomposition of $G$ as 
$$G=G_1\cup \dots \cup G_N,$$
with $N\le d+1$ and $|G_i|\le 2^r$, for all $i$. 
Then by using Lemma \ref{lem:induction}, we deduce that there exists a constant $C>0$ such that  
\begin{eqnarray*}
\log \E[\uptau_G] &\ge & \log \E[\uptau_{G_1}] +\dots +\log \E[\uptau_{G_N}]-C\log |G| \\
&\ge & \alpha_r |G| - C (r+1)\log 2.
\end{eqnarray*}
Since this holds for any $G$ with size bounded by $2^{r+1}$, we get the important relation: 
$$\alpha_{r+1} \ge \alpha_r - C(r+1)2^{-r}\log 2.$$
It follows by induction that for any $r_0$, 
\begin{eqnarray}
\label{induction.d}
\alpha_r\ge \alpha_{r_0} - C'r_02^{-r_0} \quad \textrm{for all }r\ge r_0, 
\end{eqnarray}
for some constant $C'>0$. Moreover, Proposition \ref{prop:lev2} shows that
\begin{eqnarray}
\label{poly.d}
\alpha_r \ge c_0 2^{-\frac23(r+1)},
\end{eqnarray}
for $r$ large enough. 
By combining \eqref{induction.d} and \eqref{poly.d}, we see that there exists $r_0$ such that 
$$\alpha_r\ge \alpha_{r_0}/2 \quad   \textrm{for all }r\ge r_0, $$ 
proving Theorem \ref{thm:ext_bound}.

\subsection{Level 3: an exponential bound with a logarithmic correction}
\label{sec.lev3}
\begin{proposition}
\label{prop:lev3} There exists $n_3 \in \N$ such that, if $n \geq n_3$ and $G$ is a tree with $n$ vertices, then $\mathbb{E}[\uptau_G] \geq \exp\{n/(\log n)^{10}\}$.
\end{proposition}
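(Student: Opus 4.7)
The plan is to follow the blueprint of the proof of Proposition \ref{prop:lev2}, but with Proposition \ref{prop:lev2} itself playing the role that Proposition \ref{prop:lev1} played there: it will provide the per-piece stretched-exponential bound which, when combined over many pieces via Proposition \ref{lem:induction}, upgrades to the desired almost-exponential bound.

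First I would distinguish two cases. In Case~1, if $G$ contains a star or line segment of size at least $n/(c(\log n)^{10})$ for an absolute constant $c$ chosen to absorb the constants from \eqref{eq:exp_line} and \eqref{eq:exp_star}, then those inequalities directly give $\log\E[\uptau_G]\geq n/(\log n)^{10}$. In Case~2 the maximum degree $\Delta$ and the diameter of $G$ are both bounded by that same quantity; here I would iteratively apply Lemma \ref{lem:split}(i), in the spirit of the proof of Proposition \ref{prop:lev2}, to extract disjoint connected subgraphs $G_1,\ldots,G_N$ each of size at least $M=(\log n)^{10}$, with $N$ of order $n/(\log n)^{10}$.

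Then I would apply Proposition \ref{lem:induction} to these $N$ pieces together with the Level~2 bound $\log\E[\uptau_{G_i}]\geq c_0|G_i|^{1/3}$, obtaining
$$\log\E[\uptau_G]\;\geq\;N c_0 M^{1/3}-3(N+1)\log n-O(1)\;\geq\;\frac{c_0\,n}{(\log n)^{20/3}}-O\!\left(\frac{n}{(\log n)^{9}}\right)\;\geq\;\frac{n}{(\log n)^{10}}$$
for $n$ large enough, using $20/3<9<10$. Informally, $M$ is chosen small enough for the main term to comfortably beat the target (one needs $M\leq(\log n)^{15}$ or so) and large enough for the correction to be negligible (one needs $M\gg(\log n)^3$).

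The hard part will be ensuring that $N$ pieces of size at least $M$ can actually be extracted by iterating Lemma \ref{lem:split}(i) when $\Delta$ is close to its Case~2 upper bound. The lemma only guarantees pieces of size $\geq m_{i-1}/\Delta$ at each step, so the iteration continues only while $m_{i-1}\geq M\Delta$, which is delicate when the splits happen to be balanced and cause $m_{i-1}$ to drop too fast. I expect the paper handles this either by invoking Proposition \ref{prop:new_bound} in place of Proposition \ref{lem:induction} (using the diameter bound to control inter-piece distances, as in the proof of Proposition \ref{prop:lev1}), or by a separate induction on $|G|$ in which a high-degree vertex is removed and Proposition \ref{lem:induction} is applied to the resulting subtrees.
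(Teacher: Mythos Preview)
Your direct approach has a genuine gap, and you correctly locate it. In Case~2 you aim to extract $N$ disjoint connected subtrees of size at least $M=(\log n)^{10}$ with $N$ of order $n/(\log n)^{10}$, but this is impossible in general. Take a caterpillar: a path on $\sqrt{n}$ vertices, each carrying $\sqrt{n}$ pendant leaves. Its maximum degree and diameter are both of order $\sqrt{n}\ll n/(\log n)^{10}$, so it lies in your Case~2; yet every connected subtree with at least two vertices must contain a spine vertex, so there are at most $\sqrt{n}$ disjoint such subtrees. The best a one-shot application of Proposition~\ref{lem:induction} with the Level~2 bound can then yield on this graph is $\log\E[\uptau_G]\gtrsim n^{2/3}$, far short of $n/(\log n)^{10}$. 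Your first speculated fix via Proposition~\ref{prop:new_bound} does not help, since the pieces still have to exist; your second (``remove a high-degree vertex and induct'') is closer in spirit but, as stated, does not apply when all degrees are moderate, which is exactly the hard case.

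The paper's argument is a genuine induction on the dyadic size scale $2^r$, built on Lemma~\ref{lem:split}(ii) rather than (i). One takes a centroid $x$ whose removal leaves subtrees $H_1,\ldots,H_{\deg(x)}$ each of size $\leq |G|/2$, and classifies them as \emph{large} ($\geq A(\log|G|)^{13}$), \emph{medium} (between $\tfrac14(\log|G|)^{10}$ and $A(\log|G|)^{13}$), or \emph{small}. If the medium class carries a fixed fraction of the mass, there are enough medium pieces to run your computation directly. Otherwise the large pieces carry most of the mass; there are at most $|G|/(A(\log|G|)^{13})$ of them, so applying Proposition~\ref{lem:induction} and the \emph{inductive hypothesis} (each such piece has size $\leq|G|/2$) gives $\beta(G)\geq \min_i\beta(G_i)-O\bigl((\log|G|)^{-2}\bigr)$. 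Writing $\beta_r=\inf_{|G|\leq 2^r}\beta(G)$ one obtains $\beta_{r+1}\geq\min\bigl(\text{const},\,\beta_r-O(r^{-2})\bigr)$, and summability of $\sum r^{-2}$ gives a uniform positive lower bound on $\beta_r$. For the caterpillar above, the recursion descends through $O(\log n)$ halvings, which is precisely what a one-shot decomposition cannot accomplish.
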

\begin{proof}
For any tree $G$ let $\beta(G) = \frac{\log \E[\uptau_G]}{|G|/(\log|G|)^{10}}$; then let
$$\beta_r = \inf_{2 \le |G| \le 2^r} \, \beta(G), \qquad r \geq 1.$$
We will be done once we prove that this sequence is bounded below by a positive constant. We start with the following claim:
\begin{claim} For any $A > 0$ and any tree $G$ at least one of the following statements holds true:
\begin{itemize}
\item $G$ has a vertex of degree at least $|G|/(\log |G|)^{10}$;
\item there exist disjoint connected subtrees $G_1,\ldots, G_N \subseteq G$ so that $|G_i| \geq \frac{1}{4}(\log |G|)^{10}$ for each $i$ and $N \geq \frac{|G|}{4A(\log|G|)^{13}}$;
\item there exists a decomposition $G = G_1 \cup \cdots \cup G_N$ of $G$ into disjoint connected subtrees with  $|G_i| \leq |G|/2$ for each $i$ and  $N \leq \frac{|G|}{A(\log|G|)^{13}}$.
\end{itemize}\label{cla:3poss}
\end{claim}
\begin{proof}
Let $G$ be a tree with degrees bounded above by $|G|/(\log |G|)^{10}$. By the second part of Lemma \ref{lem:split} there exists a decomposition of $G$ as a disjoint union of connected subgraphs: 
$$G=\{x\}\cup H_1\cup \dots \cup H_{\deg(x)},$$
with $|H_i|\le |G|/2$ for all $i$. Define
$$\cI : =\left\{i \in \{1,\ldots, \deg(x)\} :\ |H_i|\ge A(\log |G|)^{13}\right\}$$
and 
$$\cJ : = \left\{i \in \{1,\ldots, \deg(x)\} :\ \frac 14(\log |G|)^{10} \le |H_i| < A(\log |G|)^{13}\right\}.$$
Note that $$\sum_{i\in (\cI \cup \cJ)^c} |H_i| \le \frac 14 (\deg(x)) (\log |G|)^{10} \le  |G|/4.$$ 
Therefore either 
\begin{eqnarray}
\label{hypJ}
\sum_{i\in \cJ} |H_i| >  |G|/4.
\end{eqnarray}
or
\begin{eqnarray}
\label{hypI}
\sum_{i\in \cI} |H_i| >  |G|/2. 
\end{eqnarray} 
We also observe that
\begin{equation}\label{level3.cardi}
|\cI| < \frac{|G|}{A(\log |G|)^{13}}
\end{equation}
and moreover,
\begin{equation}
\label{level3.cardj}
\text{if } \eqref{hypJ} \text{ holds, then }|\cJ | \ge \frac{|G|}{4A(\log |G|)^{13}}.
\end{equation}

The second case in the statement of the lemma corresponds to \eqref{hypJ}; the graphs $G_1,\ldots, G_N$ are simply the $H_i$'s for which $i \in \mathcal{J}$ (and use \eqref{level3.cardj}). The third case corresponds to \eqref{hypI}; we let $G_1,\ldots, G_{N-1}$ be the $H_i$'s for which $i \in \mathcal{I}$ and $G_N = \{x\} \cup \left(\cup_{i \in \cI^c} H_i\right)$; then use \eqref{level3.cardi}. 
\end{proof}

\begin{claim}\label{cl:level3}
There exists $n^* \in \N$ such that, if $G$ is a tree with $|G| \geq n^*$, then
\begin{itemize}
\item[(a)] if $G$ has a vertex of degree larger than $|G|/(\log |G|)^{10}$, then $\beta(G) \geq c_\text{star}/2$;
\item[(b)] if there exist disjoint and connected $G_1,\ldots, G_N \subseteq G$ with $|G_i| \geq \frac{1}{4}(\log |G|)^{10}$ for each $i$, then $\beta(G) \geq \frac{(\log |G|)^{13}}{|G|}\cdot N$;
\item[(c)] if there exist disjoint and connected $G_1,\ldots, G_N \subseteq G$ such that $G = \cup_i G_i$, then $\beta(G) \geq \min_i \beta(G_i) - 4N\cdot \frac{(\log|G|)^{11}}{|G|}$.
\end{itemize}
\end{claim}
\begin{proof} Part (a) follows from \eqref{eq:exp_star}.

To obtain (b), assume that $|G|$ is large enough that $\frac14 (\log |G|)^{10} > n_2$, where $n_2$ is the constant of Proposition \ref{prop:lev2}, so that $\E\left[\uptau_{G_i} \right] \geq \exp\{c_0\cdot (\frac14 (\log |G|)^{10})^{1/3} \}$ for each $i$.
Then, by Proposition \ref{lem:induction} we obtain
\begin{align*}\mathbb{E}[\uptau_G] \geq \frac{c_\text{split}}{(2|G|^3)^{N+1}} \cdot \prod_{i=1}^N \mathbb{E}[\uptau_{G_i}] &\geq c_\text{split} \cdot \frac{  \exp\left\{c_0\cdot N\cdot( \frac14 (\log |G|)^{10})^{1/3} \right\} }{(2|G|^3)^{N+1}} \\[.2cm]
&\geq  c_\text{split} \cdot \left( \frac{  \exp\left\{  c_0\cdot (\frac14 (\log |G|)^{10})^{1/3} \right\} } {(2|G|^3)^{2}} \right)^N > e^{N \cdot (\log |G|)^{3} }\end{align*}
if $|G|$ is large enough. The desired estimate now follows by taking the $\log$ and dividing by $|G|/(\log |G|)^{10}$.

 Finally, for (c), using Proposition \ref{lem:induction} we obtain:
$$\begin{aligned}\log \E[\uptau_G] &\geq \sum_i \log \E[\uptau_{G_i}] + \log c_\text{split} - (N+1) \log 2 - 3(N+1)\log|G|\\
&\geq \min_i \beta(G_i)\cdot\frac{|G|}{(\log |G|)^{10}}   + \log c_\text{split} - (N+1)\log 2 - 3(N+1)\log|G|,
\end{aligned}$$
so that, when $|G|$ is large enough,
$$\log \E[\uptau_G] \geq  \min_i \beta(G_i)\cdot\frac{|G|}{(\log |G|)^{10}}   - 4N \log |G|$$
and the desired inequality follows by dividing by $|G|/(\log|G|)^{10}$. This completes the proof of Claim \ref{cl:level3}.\end{proof}

\noindent Now fix $r_0$ large enough that 
\begin{equation}\label{eq:choice_of_r0} 2^{r_0} > n^*\quad \text{ and }\quad r_0 > \frac{64}{(\log 2)^{2}}.
\end{equation} 
Then fix $A > 0$ large enough that
\begin{equation} \label{eq:choice_of_A}\frac{1}{4A} < \min\left(\frac{c_\text{star}}{2},\; \beta_{r_0}\right).\end{equation}

\noindent From Claims \ref{cla:3poss} and \ref{cl:level3} and the facts that $\frac{1}{4A} < \frac{c_\text{star}}{2}$ and $\log |G| = \log 2 \cdot \log_2 |G|$ we obtain the key inequality
\begin{equation}
\beta_{r+1} \geq \min\left( \frac{1}{4A},\; \beta_{r} - \frac{8}{A(\log 2)^{2}} \cdot \frac{1}{r^{2}}\right)\qquad \text{for all } r \geq r_0.
\end{equation}
Recall from \eqref{eq:choice_of_A} that $\beta_{r_0} > \frac{1}{4A}$; define
\begin{equation}\label{eq:def_r1}
r_1 = \inf\left\{r \geq r_0: \beta_{r} < \frac{1}{4A}\right\} - 1.
\end{equation}
If $r_1 = \infty$, then the sequence $(\beta_r)$ is bounded from below by $\frac{1}{4A}$ and we are done. Otherwise, we have $\beta_{r_1} \geq \frac{1}{4A}$ and $\beta_{r}<  \frac{1}{4A}$ for all $r > r_1$, so
$$\begin{aligned}
\beta_{r+1} \geq \min\left( \frac{1}{4A},\; \beta_{r} - \frac{8}{A(\log 2)^{2}}  \cdot \frac{1}{r^{2}}\right) =  \beta_{r} - \frac{8}{A(\log 2)^{2}}  \cdot \frac{1}{r^{2}} \quad \text{for all } r \geq r_1.
\end{aligned}$$
Using this recursively, for all $r > r_1$ we have
$$\begin{aligned}\beta_r &\geq \beta_{r_1} - \frac{8}{A(\log 2)^{2}}  \sum_{i=r_1}^\infty \frac{1}{i^{2}}\\[.2cm] &\geq \frac{1}{4A} - \frac{8}{A(\log 2)^{2}} \cdot \frac 1{r_1} \geq \frac{1}{4A} - \frac{8}{A(\log 2)^{2}} \cdot \frac 1{ r_0} \stackrel{\eqref{eq:choice_of_r0}}{\geq} \frac{1}{8A},\end{aligned}$$
completing the proof. 
\end{proof} 

\subsection{Proof of Theorem \ref{thm:ext}}
\label{sec:lev4}
\begin{proofof}\eqref{eq:main_thm}. The proof will be very similar to that of Proposition \ref{prop:lev3}. Fix $\varepsilon > 0$ and, for any tree $G$, let $\beta'(G) = \frac{\E[\uptau_G]}{|G|/(\log |G|)^{1+\varepsilon}}$. Then let
$$\beta'_r = \inf_{G: 2 \leq |G| \leq 2^r} \beta'(G),\qquad r\geq 1.$$

\begin{claim}\label{cl2:level4}
For any $A > 0$ and any tree $G$ at least one of the following statements is true:
\begin{itemize}
\item $G$ has a vertex of degree at least $|G|/(\log |G|)^{1+\varepsilon}$;
\item for some $k \in \{1,2,3\}$, there exist disjoint connected subtrees $G_1,\ldots, G_N \subseteq G$ so that $|G_i| \geq \frac14(\log|G|)^{k + \varepsilon}$ for each $i$, $\mathrm{dist}(G_i, G_j) = 2$ for each $i \neq j$ and $N \geq \frac{|G|}{12A(\log |G|)^{k+1+\varepsilon}}$;
\item there exists a decomposition $G = G_1 \cup \cdots \cup G_N$ into disjoint connected subtrees with $|G_i| \leq |G|/2$ for each $i$ and $N \leq \frac{|G|}{A(\log |G|)^{4+\varepsilon}} + 1 \leq 2\frac{|G|}{A(\log |G|)^{4+\varepsilon}}$.
\end{itemize}
\end{claim}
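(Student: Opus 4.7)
My plan is to mirror Claim \ref{cla:3poss}, but spread the argument over three size scales $T_k := \tfrac{1}{4}(\log|G|)^{k+\varepsilon}$ (for $k=1,2,3$) rather than the single scale used there. Setting $L := \log|G|$, if the first bullet fails then every vertex of $G$ has degree $<|G|/L^{1+\varepsilon}$; applying Lemma \ref{lem:split}(ii) then yields a vertex $x$ whose attached subtrees $H_1,\ldots,H_d$ (the components of $G\setminus\{x\}$) all satisfy $|H_i|\leq|G|/2$, which also forces $d\leq|G|/L^{1+\varepsilon}$.

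For $k=1,2,3$, let $\mathcal{J}_k := \{i : |H_i|\geq T_k\}$, so $\mathcal{J}_1\supseteq\mathcal{J}_2\supseteq\mathcal{J}_3$. Because the $H_i$'s are joined to the rest of $G$ only through $x$, any two distinct members of $\{H_i\}_{i\in\mathcal{J}_k}$ sit at distance exactly $2$ in $G$. Consequently, the moment $|\mathcal{J}_k|\geq\frac{|G|}{12AL^{k+1+\varepsilon}}$ for some $k\in\{1,2,3\}$, the second bullet holds by taking the $G_j$'s to be the members of $\{H_i\}_{i\in\mathcal{J}_k}$. Otherwise I will build a decomposition satisfying the third bullet.

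In that remaining regime I have $|\mathcal{J}_k|<\frac{|G|}{12AL^{k+1+\varepsilon}}$ for each $k$. Declare each large $H_i$ (with $i\in\mathcal{J}_3$) to be its own piece and sweep the rest into a single residual piece $G_N := \{x\}\cup\bigcup_{i\notin\mathcal{J}_3}H_i$. The count is immediate, $N=|\mathcal{J}_3|+1<\tfrac{|G|}{12AL^{4+\varepsilon}}+1\leq\tfrac{|G|}{AL^{4+\varepsilon}}+1$, and each large $H_i$ satisfies $|H_i|\leq|G|/2$ by Lemma \ref{lem:split}(ii). The delicate step is the size bound for $G_N$, which I plan to verify by partitioning the non-large weight according to the dyadic ranges $[0,T_1)$, $[T_1,T_2)$, $[T_2,T_3)$:
\begin{equation*}
\sum_{|H_i|<T_1}|H_i| \leq d\cdot T_1 \leq |G|/4, \qquad \sum_{|H_i|\in[T_k,T_{k+1})}|H_i| < |\mathcal{J}_k|\cdot T_{k+1} < \tfrac{|G|}{48A} \ \text{for } k=1,2,
\end{equation*}
which sums to $|G_N|\leq 1+|G|/4+|G|/(24A)\leq|G|/2$ whenever $A$ is bounded below (say $A\geq 1/6$) and $|G|$ is large enough.

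The main obstacle I foresee is this last weight estimate, since it degrades in the small-$A$ regime. However, for very small $A$ the third-bullet budget $|G|/(AL^{4+\varepsilon})+1$ is itself extremely loose, so the raw Lemma \ref{lem:split}(ii) decomposition $\{x\},H_1,\ldots,H_d$ --- with $N=d+1\leq|G|/L^{1+\varepsilon}+1$ --- already fits, and a short sub-case split (or simply the implicit requirement that $|G|$ be large relative to $A$ and $\varepsilon$) closes the argument. Beyond this piece of bookkeeping, the only novelty relative to Claim \ref{cla:3poss} is the iteration over the three scales $k=1,2,3$, matching the three options inside the middle bullet.
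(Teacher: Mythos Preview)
Your plan closely parallels the paper's own argument: both assume the first bullet fails, apply Lemma~\ref{lem:split}(ii) to obtain a centroid vertex $x$ with attached subtrees $H_1,\ldots,H_d$ of size at most $|G|/2$, and then sort the $H_i$ by size into a few ranges. The organizational difference is that the paper case-splits on \emph{mass} (which of the sets $\mathcal{I},\mathcal{J}_1,\mathcal{J}_2,\mathcal{J}_3$ carries at least a fixed fraction of $\sum_i|H_i|$), whereas you case-split on \emph{cardinality} (whether some $|\mathcal{J}_k|$ exceeds the threshold $|G|/(12A L^{k+1+\varepsilon})$). For the second bullet these are equivalent, but for the third bullet the paper's route is cleaner: in its case $\sum_{i\in\mathcal{I}}|H_i|\geq|G|/2$, the residual piece $G_N=\{x\}\cup\bigcup_{i\notin\mathcal{I}}H_i$ automatically satisfies $|G_N|=|G|-\sum_{i\in\mathcal{I}}|H_i|\leq|G|/2$, with no restriction on $A$ and no piece-by-piece estimate needed.

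Your cardinality-based split forces you instead to bound $|G_N|$ from above, and the estimate $|G_N|\leq 1+|G|/4+|G|/(24A)$ only gives $|G_N|\leq|G|/2$ when $A\gtrsim 1/6$. Your patch for small $A$ (falling back on the raw decomposition $\{x\},H_1,\ldots,H_d$) requires $d\leq|G|/(A L^{4+\varepsilon})$, which from $d<|G|/L^{1+\varepsilon}$ only follows when $A\leq L^{-3}$; so for fixed $A\in(0,1/6)$ and $|G|$ large, neither branch closes. This is a genuine gap relative to the claim as literally stated (``for any $A>0$''), though it is immaterial for the application, where $A$ is chosen large. To fix it cleanly, switch to the paper's mass-based dichotomy: define a top class $\mathcal{I}=\{i:|H_i|\geq A L^{4+\varepsilon}\}$; if $\sum_{\mathcal{I}}|H_i|\geq|G|/2$ you get the third bullet with the residual bound for free, and otherwise one of the three $\mathcal{J}_k$ ranges carries mass at least $|G|/12$, yielding the second bullet.
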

\begin{proof}
Fix $A > 0$. Assume that $G$ is a tree with degrees bounded above by $n/(\log n)^{1+\varepsilon}$. We again take a vertex $x$ so that all the subtrees connected to $x$, denoted $H_1,\ldots, H_{\deg(x)}$, have no more than $|G|/2$ vertices each. Now define the sets of indices
\begin{align*}
&\mathcal{I} = \{i \in \{1,\ldots, \deg(x)\}: |H_i| \geq A (\log |G|)^{4+\varepsilon}\},\\[.2cm]
&\mathcal{J}_1 = \{i \in \{1,\ldots, \deg(x)\}: \frac14 (\log |G|)^{1+\varepsilon} \leq |H_i| < (\log |G|)^{2 + \varepsilon}\},\\[.2cm]
&\mathcal{J}_2 = \{i \in \{1,\ldots, \deg(x)\}: (\log |G|)^{2+\varepsilon} \leq |H_i| < (\log |G|)^{3 + \varepsilon}\},\\[.2cm]
&\mathcal{J}_3 = \{i \in \{1,\ldots, \deg(x)\}: (\log |G|)^{3+\varepsilon} \leq |H_i| < A(\log |G|)^{4 + \varepsilon}\}.
\end{align*}
Note that $$|\mathcal{I}| \leq \frac{|G|}{A(\log |G|)^{4+\varepsilon}}.$$
Moreover, since
$$\sum_{i \in (\mathcal{I} \cup \mathcal{J}_1 \cup \mathcal{J}_2 \cup \mathcal{J}_3)^c} |H_i| \leq \deg(x) \cdot \frac14(\log |G|)^{1+\varepsilon} \leq \frac{|G|}{4},$$
at least one of the following holds:
$$(\mathrm{i})\; \sum_{i \in \mathcal{I}} |H_i| \geq \frac{|G|}{2},\qquad  (\mathrm{ii})\; \sum_{i \in \mathcal{J}_1} |H_i| \geq \frac{|G|}{12},\qquad (\mathrm{iii})\; \sum_{i \in \mathcal{J}_2} |H_i| \geq \frac{|G|}{12} ,\qquad (\mathrm{iv})\; \sum_{i \in \mathcal{J}_3} |H_i| \geq \frac{|G|}{12}.$$
We also observe that (ii), (iii) and (iv) respectively imply
\begin{align*}
& |\mathcal{J}_1| \geq \frac{|G|}{12(\log |G|)^{2 + \varepsilon}},\qquad |\mathcal{J}_2| \geq \frac{|G|}{12(\log |G|)^{3 + \varepsilon}},\qquad | \mathcal{J}_3| \geq \frac{|G|}{12A(\log |G|)^{4 + \varepsilon}}.
\end{align*}
Finally, note that the distance between $H_i$ and $H_j$ for $i \neq j$ is equal to 2, since both $H_i$ and $H_j$ are connected to $x$.
\end{proof}

\begin{claim}\label{cl:level4}
There exists $n^\star \in \N$ such that, if $G$ is a tree with $|G| \geq n^\star$, then
\begin{itemize}
\item[(a)] if $G$ has a vertex of degree larger than $|G|/(\log |G|)^{1+\varepsilon}$, then $\beta(G) \geq c_\text{star}/2$;
\item[(b)] if $k \in \{1,2,3\}$ and there exist disjoint and connected $G_1,\ldots, G_N \subseteq G$ with $|G_i| \geq \frac{1}{4}(\log |G|)^{k+\varepsilon}$ for each $i$ and $\mathrm{dist}(G_i,G_j) = 2$ for each $i \neq j$, then $\beta(G) \geq \frac{(\log |G|)^{k + 1 +\varepsilon}}{|G|}\cdot N$;
\item[(c)] if there exist disjoint and connected $G_1,\ldots, G_N \subseteq G$ such that $G = \cup_i G_i$, then $\beta(G) \geq \min_i \beta(G_i) - 4N\cdot \frac{(\log|G|)^{2+\varepsilon}}{|G|}$.
\end{itemize}
\end{claim}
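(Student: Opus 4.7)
Plan: This is the analogue of Claim \ref{cl:level3}, with exponent $10$ replaced by $1+\varepsilon$ and the distance-$2$ hypothesis added in (b). Parts (a) and (c) follow the Claim \ref{cl:level3} arguments essentially verbatim; (b) is the main obstacle, and is where the distance hypothesis plays a crucial role.

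Part (a) is immediate: a vertex of degree $D \geq |G|/(\log|G|)^{1+\varepsilon}$ gives a star subgraph $S_D \subseteq G$, so by attractiveness and \eqref{eq:exp_star}, $\log\E[\uptau_G] \geq c_\text{star} D$, hence $\beta(G) \geq c_\text{star} \geq c_\text{star}/2$. For part (c), apply Proposition \ref{lem:induction} and take logarithms to obtain $\log\E[\uptau_G] \geq \sum_i \log\E[\uptau_{G_i}] - (N+1)\log(2|G|^3) + \log c_\text{split}$. Since $\log|G_i|\leq\log|G|$ and $\sum_i|G_i|=|G|$, we get $\sum_i\log\E[\uptau_{G_i}] \geq \min_i\beta(G_i)\cdot |G|/(\log|G|)^{1+\varepsilon}$; dividing through by $|G|/(\log|G|)^{1+\varepsilon}$ and bounding the penalty by $4N\log|G|$ (for $|G|$ large) yields $\beta(G) \geq \min_i\beta(G_i) - 4N(\log|G|)^{2+\varepsilon}/|G|$.

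The heart of the matter is part (b). A naive application of Proposition \ref{lem:induction} fails because its combinatorial cost $(2|G|^3)^{N+1}$ dwarfs the extinction time one can hope for when the $|G_i|$ are only polylogarithmic in $|G|$. Instead, I invoke Proposition \ref{prop:new_bound}, whose cost depends on $\sigma_{i,j}=|G_i|+|G_j|+\mathrm{dist}(G_i,G_j)-1$. The first step is a structural reduction: in a tree, any $N$ disjoint connected subtrees $G_1,\ldots,G_N$ that are pairwise at distance exactly $2$ must all attach to a single ``hub'' vertex $x\notin\cup_iG_i$, with each $G_i$ containing a unique neighbor $u_i$ of $x$. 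This is seen by contracting each $G_i$ to a single vertex $w_i$ in the quotient tree, where $w_1,\ldots,w_N$ are pairwise at distance $2$; three vertices of a tree that are pairwise at distance $2$ must share a common neighbor, since otherwise the unique tree-geodesic between two of them would have length at least $4$. Uniqueness of $u_i$ then follows because two neighbors of $x$ in the same connected $G_i$ would create a cycle. We can therefore shrink each $G_i$ to a connected subtree $G_i'\subseteq G_i$ of size exactly $m:=\lceil\frac14(\log|G|)^{k+\varepsilon}\rceil$ containing $u_i$, without destroying the pairwise distance-$2$ property; this gives $\sigma_{i,j}\leq 2m+1\leq (\log|G|)^{k+\varepsilon}$.

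With this reduction, Proposition \ref{prop:lev3} yields $\log\E[\uptau_{G_i'}] \geq m/(\log m)^{10} \geq c(\log|G|)^{k+\varepsilon}/(\log\log|G|)^{10}$ once $m\geq n_3$. Feeding this into Proposition \ref{prop:new_bound} with a polynomial time scale (such as $s=|G|^2$), the coupling term becomes superpolynomially small, because $s/(\sigma_{i,j}(\log\sigma_{i,j})^3)$ grows polynomially in $|G|$; the combinatorial term $(2s)^N/\prod_i\E[\uptau_{G_i'}]$ is bounded by $\exp\{-Nc(\log|G|)^{k+\varepsilon}/(\log\log|G|)^{10}+O(N\log|G|)\}$, whose negative piece dominates for $|G|$ large thanks to $(\log|G|)^{\varepsilon}/(\log\log|G|)^{10}\to\infty$ (so $(\log|G|)^{k+\varepsilon}/(\log\log|G|)^{10}\gg\log|G|$ for any $k\geq 1$). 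Choosing $t$ slightly smaller than $s\cdot\exp\{Nc(\log|G|)^{k+\varepsilon}/(2(\log\log|G|)^{10})\}$ makes $\P[\uptau_G\leq t]\leq 1/2$, and \eqref{eq:attract_geom} gives $\log\E[\uptau_G]\geq\log(2t)\geq N(\log|G|)^k$ for $|G|$ large. Dividing by $|G|/(\log|G|)^{1+\varepsilon}$ yields $\beta(G)\geq N(\log|G|)^{k+1+\varepsilon}/|G|$, as claimed.
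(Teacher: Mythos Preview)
Your proof is correct, and for parts (a) and (c) it matches the paper's argument exactly. For part (b) you also invoke Proposition~\ref{prop:new_bound}, as does the paper, but you work considerably harder than necessary.

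The structural reduction you carry out --- arguing via the median property of trees that all the $G_i$ hang off a single hub vertex $x$, then shrinking each $G_i$ to a subtree $G_i'$ of size exactly $m=\lceil\tfrac14(\log|G|)^{k+\varepsilon}\rceil$ containing the neighbor $u_i$ of $x$, so as to force $\sigma_{i,j}\le 2m+1$ --- is valid, but the paper dispenses with it entirely. It simply uses the trivial bound $\sigma_{i,j}\le|G|$ and takes $s=|G|^4$; the coupling term in \eqref{eq:new_bound} is then at most $N^2|G|\exp\{-c_{\text{coup}}|G|^2\}$, which is negligible. The combinatorial term $(2|G|^4)^N/\prod_i\E[\uptau_{G_i}]$ is handled exactly as you do, via Proposition~\ref{prop:lev3} and the fact that $(\log|G|)^{k+\varepsilon}/(\log\log|G|)^{10}\gg\log|G|$. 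In particular, the distance-$2$ hypothesis is never actually used in the paper's proof of (b); it is merely inherited from the output of Claim~\ref{cl2:level4}. Your remark that ``a naive application of Proposition~\ref{lem:induction} fails'' is thus somewhat misleading: the penalty there is only of order $N\log|G|$, which \emph{is} beaten by the Level-3 bound on $\prod_i\E[\uptau_{G_i}]$.

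One small slip: from $\P[\uptau_G\le t]\le\tfrac12$ you get $\E[\uptau_G]\ge t/2$ (via $\E[\uptau_G]\ge t\,\P[\uptau_G>t]$), hence $\log\E[\uptau_G]\ge\log(t/2)$, not $\log(2t)$; this does not affect the conclusion.
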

\begin{proof}
The proofs of statements (a) and (c) are the same as those of (a) and (c) of Claim \ref{cl:level3}, respectively. Let us prove (b) using Proposition \ref{prop:new_bound}. In the notation of that proposition, we simply bound $\sigma_{i,j} \leq |G|$ and let $s = |G|^4$ and $t = 2\exp\{N(\log |G|)^k\}$. Note that, if $n$ is large enough, for each $i$ we have
$$\E[\uptau_{G_i}] \geq \exp \left\{\frac{\frac14 (\log|G|)^{k+\varepsilon}}{\left(\log\left(\frac14 (\log|G|)^{k+\varepsilon}\right)\right)^{10}}\right\}$$
by Proposition \ref{prop:lev3}. Then,
\begin{align*}&\P\left[\uptau_G \leq t\right] \leq  \left\lceil \frac{t}{s}\right\rceil\cdot \left(\frac{(2s)^N}{\prod_{i=1}^N \E[\uptau_{G_i}]} +N^2 \cdot \max_{i,j}\sigma_{i,j} \cdot \exp\left\{-\frac{s}{\left(\max_{i,j} \sigma_{i,j} \right)^2} \right\}\right)\\[.2cm]
&\leq  \frac{2\exp\{N(\log |G|)^k\}}{|G|^4} \left((2|G|^4)^N \exp \left\{- \frac{\frac14 (\log|G|)^{k+\varepsilon}\cdot N}{\left(\log\left(\frac14 (\log|G|)^{k+\varepsilon}\right)\right)^{10}} \right\} + N^2 |G| \exp\left\{-|G|^2\right\}\right). \end{align*}
If $n$ is large enough, this is smaller than $1/2$, uniformly on $N \in \{1,\ldots,n\}$. This shows that $\E[\uptau_G] \geq \exp\{N(\log |G|)^k\}$, so that $\beta'(G) \geq \frac{(\log |G|)^{k + 1+\varepsilon}}{|G|} \cdot N$ as desired. 
\end{proof}

Choose $r_0$ large enough that $2^{r_0} > n^\star$ and  $r_0 \geq \frac{192}{(\log 2)^2}$, and choose $A$ large enough that $\frac{1}{12A} < \min(c_\text{star}/2, \beta'_{r_0})$.

Putting together Claims \ref{cl2:level4} and \ref{cl:level4}, we obtain the inequality
$$\beta'_{r+1} \geq \min\left(\frac{1}{12A},\; \beta'_r - \frac{8}{A(\log 2)^2} \cdot \frac{1}{r^2} \right) \qquad \text{for all } r \geq r_0. $$
From here, we conclude the proof exactly as in Proposition \ref{prop:lev3}.
\end{proofof}

\begin{proofof}\eqref{eq:main_thm2}.
For every $\varepsilon > 0$ and every graph $G$ with at least two vertices, let
$$T_\varepsilon(G) = \exp\left\{\frac{|G|}{(\log|G|)^{1+\varepsilon}} \right\}.$$

\begin{claim} For every $\varepsilon > 0$ there exists $C_\varepsilon > 0$ such that, for any graph $G$, 
$$\P\left[\uptau_G \leq {T_\varepsilon(G)} \right] < C_\varepsilon \cdot T_\varepsilon(G)^{-1}. $$\end{claim}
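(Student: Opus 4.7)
The plan is to combine the Markov-type inequality from Lemma \ref{lem:attract}(i) with the lower bound on $\E[\uptau_G]$ that was just established in \eqref{eq:main_thm}. First I would apply Lemma \ref{lem:attract}(i) with $t = T_\varepsilon(G)$ to get
$$\P[\uptau_G \leq T_\varepsilon(G)] \leq \frac{T_\varepsilon(G)}{\E[\uptau_G]}.$$
So it is enough to show $\E[\uptau_G] \geq C_\varepsilon^{-1} \cdot T_\varepsilon(G)^2$; that is, $\log \E[\uptau_G] \geq 2|G|/(\log|G|)^{1+\varepsilon} - \log C_\varepsilon$.

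Next I would invoke \eqref{eq:main_thm} with the parameter $\varepsilon' := \varepsilon/2$ to obtain
$$\log \E[\uptau_G] \;\geq\; c_{\varepsilon/2}\cdot\frac{|G|}{(\log|G|)^{1+\varepsilon/2}} \;=\; \bigl(c_{\varepsilon/2}\,(\log|G|)^{\varepsilon/2}\bigr)\cdot\frac{|G|}{(\log|G|)^{1+\varepsilon}}.$$
Since $(\log|G|)^{\varepsilon/2}\to\infty$ as $|G|\to\infty$, there exists $n_\varepsilon$ such that for $|G|\geq n_\varepsilon$ the prefactor $c_{\varepsilon/2}(\log|G|)^{\varepsilon/2}$ exceeds $2$, which gives $\log\E[\uptau_G]\geq 2|G|/(\log|G|)^{1+\varepsilon}$, i.e.\ $\E[\uptau_G] \geq T_\varepsilon(G)^2$. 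Combined with the first step, this yields $\P[\uptau_G\leq T_\varepsilon(G)] \leq T_\varepsilon(G)^{-1}$ for all $G$ with $|G|\geq n_\varepsilon$.

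For the finitely many remaining values $2\leq |G| < n_\varepsilon$, the function $T_\varepsilon(G)$ is bounded above by some constant $M_\varepsilon$, so the trivial bound $\P[\uptau_G\leq T_\varepsilon(G)]\leq 1 \leq M_\varepsilon \cdot T_\varepsilon(G)^{-1}$ holds. Setting $C_\varepsilon := \max(1, M_\varepsilon)$ finishes the proof. The only real work is the asymptotic comparison in the second step, and this is immediate once the right $\varepsilon'<\varepsilon$ is plugged into \eqref{eq:main_thm}; there is no genuine obstacle, so this final claim is essentially a corollary of \eqref{eq:main_thm} combined with the attractiveness inequality.
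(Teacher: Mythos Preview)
Your proposal is correct and follows exactly the approach of the paper: apply \eqref{eq:attract_geom} with $t=T_\varepsilon(G)$ and then invoke \eqref{eq:main_thm} with $\varepsilon$ replaced by $\varepsilon/2$ to get a lower bound on $\E[\uptau_G]$ that beats $T_\varepsilon(G)^2$. The paper compresses this into a one-line remark, but your expanded treatment of the asymptotic comparison and of the finitely many small cases is accurate and fills in precisely the details the paper omits.
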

\begin{proof} This follows from applying \eqref{eq:main_thm} with $\varepsilon$ replaced by $\varepsilon/2$ and \eqref{eq:attract_geom}.\end{proof}

\begin{claim}
For all $\varepsilon > 0$ there exists $N_\varepsilon \in \N$ such that, if $G$ is a tree and $G_0 \subset G$ is a connected subtree with $|G_0| = N_\varepsilon$, then the contact process on $G$ satisfies
$$\P\left[\xi^{G_0}_{T_\varepsilon(G)} \neq \underline{0} \right] > \frac12.$$
\end{claim}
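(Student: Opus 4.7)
My plan is to combine Proposition \ref{prop:prelim_coup} with the preceding claim. Fix an intermediate time $t_1 = T_\varepsilon(G_0) = \exp\{N_\varepsilon/(\log N_\varepsilon)^{1+\varepsilon}\}$. On the event that $\xi^{G_0}$ and $\xi^{\underline 1}$ agree at time $t_1$, the Markov property of the Harris graphical construction forces $\xi^{G_0}_s = \xi^{\underline 1}_s$ for all $s\geq t_1$. Hence, using also the preceding claim applied to $G$,
$$\P[\xi^{G_0}_{T_\varepsilon(G)}\neq\underline 0] \;\geq\; \P[\xi^{G_0}_{t_1}=\xi^{\underline 1}_{t_1}] - \P[\xi^{\underline 1}_{T_\varepsilon(G)}=\underline 0] \;\geq\; \P[\xi^{G_0}_{t_1}=\xi^{\underline 1}_{t_1}] - \frac{C_\varepsilon}{T_\varepsilon(G)}.$$

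To lower bound the coupling probability, I would use two ingredients. First, by monotonicity, $\xi^{G_0}_{t_1}$ on $G$ dominates the intrinsic contact process on $G_0$ started from $\underline 1_{G_0}$; applying the preceding claim to the graph $G_0$ itself yields $\P[\xi^{G_0}_{t_1}\neq\underline 0] \geq 1 - C_\varepsilon/T_\varepsilon(G_0)$, which can be made arbitrarily close to $1$ by choosing $N_\varepsilon$ large. Second, Proposition \ref{prop:prelim_coup} directly gives $\P[\xi^{G_0}_{t_1}\neq\underline 0,\;\xi^{G_0}_{t_1}\neq\xi^{\underline 1}_{t_1}] \leq \exp\{-c_\text{coup}\lfloor t_1/(n(\log n)^3)\rfloor\}$. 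Combining these,
$$\P[\xi^{G_0}_{t_1}=\xi^{\underline 1}_{t_1}] \;\geq\; 1 - \frac{C_\varepsilon}{T_\varepsilon(G_0)} - \exp\!\Bigl\{-c_\text{coup}\,\Bigl\lfloor\tfrac{t_1}{n(\log n)^3}\Bigr\rfloor\Bigr\}.$$

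Choosing $N_\varepsilon$ large enough so that each of the three error terms $C_\varepsilon/T_\varepsilon(G)$, $C_\varepsilon/T_\varepsilon(G_0)$, and the coupling term above is strictly less than $1/6$ delivers $\P[\xi^{G_0}_{T_\varepsilon(G)}\neq\underline 0] > 1/2$. The main obstacle is making the coupling term small \emph{uniformly in} $n = |G|$: the bound above requires $t_1 \gg n(\log n)^3$, equivalently $N_\varepsilon/(\log N_\varepsilon)^{1+\varepsilon} \gg \log n$, which fails whenever $|G|$ is enormous compared to $N_\varepsilon$. In that regime, I expect one needs a complementary spreading argument — exploiting that from $\underline 1_{G_0}$ the infection escapes $G_0$ and inflates into a configuration that is ``lit'' over a subregion of $G$ large enough that Proposition \ref{prop:prelim_coup} can be applied with the effective size of that subregion replacing $n$ — and then gluing the two regimes together. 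Carrying this out is the step I expect to be the main technical difficulty.
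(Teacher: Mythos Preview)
You have correctly identified the crux of the matter: applying Proposition~\ref{prop:prelim_coup} on the whole tree $G$ at the fixed time $t_1 = T_\varepsilon(G_0)$ gives a coupling error $\exp\{-c_\text{coup}\lfloor t_1/(|G|(\log|G|)^3)\rfloor\}$ that is useless once $|G|$ is large compared to $N_\varepsilon$. Your proposal does not resolve this; the suggested ``spreading argument'' is too vague, and a two-regime split (small $|G|$ versus large $|G|$) would just relocate the difficulty to the transition between regimes.

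The paper's fix is different and quite clean: never apply the coupling on all of $G$. Instead, choose a chain of connected subtrees
\[
G_0 \subset G_1 \subset \cdots \subset G_m = G, \qquad |G_{k+1}| = |G_k| + 1,
\]
and at step $k$ apply Proposition~\ref{prop:prelim_coup} \emph{inside $G_k$} at time $T_\varepsilon(G_{k-1})$. Concretely, set
\[
E_k = \bigl\{ G_k \times \{0\} \stackrel{G_k}{\leftrightarrow} G_k \times \{T_\varepsilon(G_k)\} \bigr\}, \qquad
F_k = \Bigl\{\text{all pairs of infection paths in }G_k\text{ over }[0,T_\varepsilon(G_{k-1})]\text{ merge}\Bigr\}.
\]
One checks that $E_0 \cap \bigcap_{k\geq 1}(E_k \cap F_k) \subset \{\xi^{G_0}_{T_\varepsilon(G)} \neq \underline{0}\}$: on $E_{k-1}\cap F_k$ the infection started from $G_0$ has, at time $T_\varepsilon(G_{k-1})$, coupled with $\xi^{\underline 1}$ restricted to $G_k$, and $E_k$ then carries it to time $T_\varepsilon(G_k)$. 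Now $\P[E_k^c] \leq C_\varepsilon\, T_\varepsilon(G_k)^{-1}$ by the preceding claim, while
\[
\P[F_k^c] \leq |G_k|\cdot \exp\Bigl\{-c_\text{coup}\Bigl\lfloor \tfrac{T_\varepsilon(G_{k-1})}{|G_k|(\log|G_k|)^3}\Bigr\rfloor\Bigr\}.
\]
The point is that here the ratio in the exponent is $T_\varepsilon(G_{k-1})$ divided by a polynomial in $|G_k| = |G_{k-1}|+1$, which is enormous for every $k$ once $N_\varepsilon$ is large. Both series $\sum_k \P[E_k^c]$ and $\sum_k \P[F_k^c]$ are therefore bounded by quantities depending only on $N_\varepsilon$, and can be made less than $1/2$ by choosing $N_\varepsilon$ large --- uniformly in $|G|$. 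This incremental bootstrapping is the missing idea in your proposal.
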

\begin{proof}
Let $G$ be a tree with a connected subtree $G_0$. Choose a sequence of connected subtrees
$$G_0 \subset G_1 \subset \cdots \subset G_{m} = G$$
so that for each $k$, $|G_{k+1}| = |G_k| + 1$ (in particular, $m = |G| - |G_0|$). Define the events
\begin{align*}
&E_k = \left\{G_k \times \{0\} \stackrel{G_k}{\leftrightarrow} G_k \times \{T_\varepsilon(G_k)\} \right\},\qquad 0 \leq k \leq m,\\[.2cm]
&F_k = \left\{\begin{array}{l}\text{for all } x,y,z,w \in G_k \text{ with } (x,0)  \stackrel{G_k}{\leftrightarrow} (y, T_\varepsilon(G_{k-1}))\\[.2cm]\text{and }(z,0) \stackrel{G_k}{\leftrightarrow} (w, T_\varepsilon(G_{k-1})), \text{ we have } (x,0) \stackrel{G_k}{\leftrightarrow} (w, T_\varepsilon(G_{k-1})) \end{array}\right\} \qquad 1 \leq k \leq m.
\end{align*}
The desired result now follows from observing that
$$E_0 \cap \bigcap_{k=1}^m (E_k \cap F_k) \subset \left\{\xi^{G_0}_{T_\varepsilon(G)} \neq \underline{0} \right\}$$
and
$$\P[E_k^c] \leq T_\varepsilon(G_k)^{-1},\qquad \P[F_k^c] \leq \exp\left\{-c_\text{coup} \cdot \frac{T_\varepsilon(G_{k-1})}{|G_k|(\log |G_k|)^3} \right\}. $$
\end{proof}

We are now ready to conclude. Let $G$ be a tree with $|G| \geq N_\varepsilon$. Also let $A \subset G$, $A \neq \varnothing$, and $x \in A$. Fix a connected subtree $G_0 \ni x$ with $|G_0| = N_\varepsilon$. Then,
$$\P\Ll[\xi^A_{T_\varepsilon(G)} \neq \underline{0} \Rr] \ge \P\Ll[\xi^A_{1+T_\varepsilon(G)} \neq \underline{0} \Rr] \geq \P\Ll[\xi^x_1 \equiv 1 \text{ on } G_0\Rr] \cdot \frac12 \geq \frac{\theta(N_\varepsilon)}{2},$$
where we define
$$\theta(n) = \inf\left\{\P\Ll[\xi^z_1 \equiv 1 \text{ on } G'\Rr]: G' \text{ is a tree with } |G'| = n, \; z \in G' \right\}.$$
Noting that the set of pairs $(G', z)$ over which the infimum is taken is finite, and the probability is positive for each pair, we obtain $\theta(n) > 0$ for each $n$. So \eqref{eq:main_thm2} is now proved for $n$ large enough. We can now choose $c_\varepsilon$ small to cover the remaining values of $n$.
\end{proofof}

\section{Appendix: Proofs of results of Section \ref{s:prelim}}
\subsection{Proof of Lemma \ref{lem:infest_segment}}
Here we will recall some facts about the one-dimensional contact process in order to prove the two first statements of the lemma. 
The third one \eqref{eq:exp_line} is proved in \cite{lig2}, see (3.11) in Part I of that book. 

We observe that it is sufficient to prove that these statements hold for $n$ large enough, as we can then lower the value of $c_\text{line}$, if necessary, to take care of the remaining values of $n$.

We will need to simultaneously consider the contact process on the integer line $\Z$ (which we denote by $(\zeta_t)$) and on the line segment $\{0,\ldots, n\}$ (denoted by $(\xi_t)$). Our previous conventions about superscripts still apply; for example, $(\zeta^x_t)$ and $(\zeta^{\underline{1}}_t)$ are the processes on $\Z$ started respectively from only $x$ infected and full occupancy.

We first gather the results we need about the contact process on $\Z$ in the following lemma. Let $r_t = \sup\{x: \zeta^{0}_t(x) = 1\}$.  

\begin{lemma}\label{lem:good_path}
There exists $c_{\Z} > 0$ such that, for the contact process on $\Z$, 
\begin{itemize}
\item[(i)] conditioned on $\{(0,0)\leftrightarrow \infty\}$, for  large enough $z > 0$, with probability larger than $1 - e^{-c_\Z\cdot z}$ there exists an infection path $\upgamma:[0,\infty) \to \Z$ such that $\upgamma(0) = 0$ and $\upgamma(t) \geq -z+c_\Z\cdot  t$ for all $t \geq 0$;
\item[(ii)] with probability larger than $c_\Z$, there exists an infection path $\upgamma: [0,\infty) \to \Z$ such that $\upgamma(0) = 0$ and $\upgamma(t) \geq \lfloor c_\Z \cdot t \rfloor$ for all $t \geq 0$;
\item[(iii)] for large enough  $t > 0$,
\begin{equation}
\label{eq:bound_r} \P\Ll[\zeta^0_t \neq \underline{0},\;\; \max_{0 \leq s \leq t} r_s < \frac{c_\Z \cdot t}{2}\Rr] < e^{-c_\Z \cdot t}.
\end{equation}
\end{itemize}
\end{lemma}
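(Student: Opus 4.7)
The three assertions all rest on the positive asymptotic speed of the right edge of the supercritical one-dimensional contact process together with exponential-in-$t$ large-deviation bounds. Let $r_t = \sup\{x : \zeta_t^0(x) = 1\}$, with the convention $\sup\varnothing = -\infty$. For $\lambda > \lambda_c(\Z)$, Durrett's edge-speed theorem gives a deterministic $\alpha = \alpha(\lambda) > 0$ such that $r_t/t \to \alpha$ almost surely on $\{(0,0) \leftrightarrow \infty\}$; all three parts of the lemma will be obtained with any $c_\Z < \alpha/2$.

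My plan for (iii) is to establish the exponential large-deviation bound
\[
\P\Bigl[\,\max_{0 \le s \le t} r_s < \tfrac12 \alpha t,\ \zeta_t^0 \ne \underline 0 \,\Bigr] \le e^{-c t}
\]
for some $c > 0$ and all large $t$. The standard route is the Bezuidenhout-Grimmett block construction: by looking at the process on suitably rescaled space-time blocks, the surviving supercritical contact process dominates a supercritical oriented site percolation on $\Z^2$. The almost-sure linear growth of the right edge of that oriented percolation, together with the classical exponential tail on its speed, transfers back to an exponential tail for $r_t$. Picking $c_\Z$ smaller than $\alpha/2$ and the percolation-speed constant gives \eqref{eq:bound_r}. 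The same block construction immediately yields (ii): the infinite open oriented-percolation cluster through the origin, which exists with positive probability, supplies (after undoing the rescaling) an infection path $\upgamma:[0,\infty)\to\Z$ with $\upgamma(0)=0$ and $\upgamma(t)\ge\lfloor c_\Z t\rfloor$ for all $t$.

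For (i) I would use a restart/renewal argument. Condition on $\{(0,0) \leftrightarrow \infty\}$, and for each $z$ let $\sigma_z$ be the first time at which the graphical construction reveals a ``good'' block-percolation configuration of depth $z$ to the left of the origin, out of which a linearly-growing path can be extracted by the argument of (ii). The tail $\P[\sigma_z > k \mid (0,0)\leftrightarrow \infty]$ decays exponentially in $k$ by the block construction, uniformly once $z$ is large; in particular the probability that no such good configuration appears at all, measured in terms of the required depth $z$, is at most $e^{-c z}$. On the complementary event, concatenating the trivial path up to time $\sigma_z$ with the linearly-growing path produced by (ii) gives an infection path $\upgamma$ with $\upgamma(0)=0$ and $\upgamma(t) \ge -z + c_\Z t$ for all $t \ge 0$, after possibly shrinking $c_\Z$ one last time.

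The technical core is the exponential large-deviation bound for the right edge underlying (iii); once that is in hand, (ii) falls out of the same block construction and (i) reduces to a restart argument. The point requiring most care will be the restart in (i): the conditioning on the global survival event must be shown not to destroy the $e^{-c_\Z z}$ decay of the ``bad depth $z$'' event, which is handled by working with block-percolation events of finite space-time support and invoking FKG/BG-type estimates rather than directly with the infinite-volume survival event.
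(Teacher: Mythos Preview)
Your overall strategy is different from the paper's and is in principle viable, but part (i) has a genuine gap as written.

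The paper does not go through the Bezuidenhout--Grimmett block construction at all. It uses Kuczek's regeneration structure for the right edge: conditioned on $\{(0,0)\leftrightarrow\infty\}$, there are renewal times $\sigma_n$ such that the triples $(\sigma_{n+1}-\sigma_n,\; r_{\sigma_{n+1}}-r_{\sigma_n},\; M_n)$ are i.i.d.\ with exponential tails, where $M_n$ is the maximal displacement between consecutive renewals. Part (i) is then a direct large-deviation estimate for i.i.d.\ sums: one defines an event $E$ on which $r_{\sigma_n}$, $M_n$ and $\sigma_n$ are all close to their linear asymptotics, shows $\P[E^c\mid(0,0)\leftrightarrow\infty]\le e^{-cz}$, and on $E$ the infection path through the points $(r_{\sigma_n},\sigma_n)$ satisfies $\upgamma(t)\ge -z + ct$ by elementary algebra. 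Parts (ii) and (iii) are then derived \emph{from} (i), not the other way around: (ii) by fixing a $z$ and using FKG to keep the origin infected for an initial stretch of time, (iii) by splitting off the event $\{\zeta^0_t\ne\underline 0,\;(0,0)\nleftrightarrow\infty\}$ and applying (i) with $z=c_\Z t/2$.

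Your route for (iii) and (ii) via block construction and oriented percolation is standard and would work. The problem is (i). You introduce a stopping time $\sigma_z$ described only as ``the first time at which the graphical construction reveals a good block-percolation configuration of depth $z$'', and then assert that ``the probability that no such good configuration appears at all, measured in terms of the required depth $z$, is at most $e^{-cz}$''. Neither the object $\sigma_z$ nor this implication is actually defined: an exponential tail for $\sigma_z$ in $k$ does not by itself produce an exponential bound in $z$, and you have not said what quantity is controlled by $z$. Moreover, ``concatenating the trivial path up to time $\sigma_z$'' is not meaningful until you explain why the infection path from $(0,0)$ stays above $-z$ during $[0,\sigma_z]$; on survival the process can make large leftward excursions before the first good block appears, and controlling their size is exactly the content of (i). In the Kuczek approach this is handled automatically by the exponential tail on $M_n$; in a block-construction approach you would need an analogous control on the leftward extent of the surviving cluster before the first renormalised renewal, and that step is missing from your sketch.
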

\begin{proof}
On the event $\{(0,0)\leftrightarrow \infty\}$, define
$$\begin{aligned}&\sigma_0 \equiv 0,\quad \sigma_{n+1} = \inf\{t \geq \sigma_n + 1: (r_t, t) \leftrightarrow \infty\},\; n \geq 0,\\[.2cm]
&M_n = \max\{|x - r_{\sigma_n}|: (r_{\sigma_n}, \sigma_n) \leftrightarrow (x,t) \text{ for some } t \in [\sigma_n, \sigma_{n+1}]\}.\end{aligned}$$ 
It is shown in \cite{kuc} that, 
\begin{align}\label{eq:ren_rw}&\hspace{-.2cm}\begin{array}{l}\text{conditioned on $\{(0,0) \leftrightarrow \infty\}$, the vectors }(\sigma_{n+1}-\sigma_n,\;r_{\sigma_{n+1}} - r_{\sigma_n},\;M_n)_{n \geq 0}\\\hspace{5cm}\text{ are independent and identically distributed};\end{array}\\
&\label{eq:connect}\text{on $\{(0,0)\leftrightarrow \infty\}$, for each }n,\;(r_{\sigma_n},\sigma_n) \leftrightarrow (r_{\sigma_{n+1}}, \sigma_{n+1});\\
&\label{eq:exp_renewal}\text{there exists }\bar c > 0\text{ such that } \P\Ll[\max(\sigma_1,\;M_1) \geq m\mid (0,0)\leftrightarrow \infty\Rr] \leq e^{-\bar{c}m},\; m > 0.
\end{align}
By \eqref{eq:ren_rw} and the law of large numbers, there exist $a > 0$ and $b \in \R$ such that
\begin{equation}\label{eq:ab_lim}\P\Ll[\lim_{n \to \infty} \frac{\sigma_n}{n} = a,\; \lim_{n \to \infty} \frac{r_{\sigma_n}}{n} = b \mid (0,0) \leftrightarrow \infty\Rr] =1.\end{equation}
Moreover, by Theorem 2.19 in Chapter VI of \cite{lig1}, there exists $\alpha > 0$ such that
$$\P\Ll[\left.\lim_{t\to\infty}\frac{r_t}{t} =\alpha \;\right| (0,0) \leftrightarrow \infty\Rr] = 1,$$
so we must have $b > 0$.

\noindent Now for $z > 0$, define the event
$$E = \left\{(0,0) \leftrightarrow \infty,\; r_{\sigma_n} \geq \frac{b n}{2} - \frac{z}{3},\;  M_n \leq \frac{b n}{4} + \frac{z}{3}\text{ and } \sigma_n \leq 2a\left(n - 1 + \frac{4 z}{3b}\right)\text{ for all } n \right\}.$$
By \eqref{eq:exp_renewal}, \eqref{eq:ab_lim} and simple large deviation estimates for random walks, there exists $c > 0$ such that 
$$\P\Ll[E\mid (0,0) \leftrightarrow \infty\Rr] > 1 - e^{-cz}.$$
If $E$ occurs, by \eqref{eq:connect} we can define an infection path $\upgamma:[0,\infty) \to \Z$ such that $\upgamma(0) = 0$ and $\upgamma(\sigma_n) = r_{\sigma_n}$ for each $n$. Let $t \geq 0$. Since
$$\sigma_{\left\lceil \frac{t}{2a} - \frac{4z}{3b}\right \rceil} \leq 2a \left(\left\lceil \frac{t}{2a} - \frac{4z}{3b}\right \rceil -  1 + \frac{4z}{3b} \right) \leq t,$$
we have
$$N_t := \sup\{n: \sigma_n \leq t\} \geq \frac{t}{2a} - \frac{4z}{3b},$$
so that
\begin{equation}\label{ex:inf_path}\upgamma(t) \geq r_{\sigma_{N_t}} - M_{N_t} \geq \frac{bN_t}{2} - \frac{z}{3} - \frac{bN_t}{4} - \frac{z}{3} \geq \frac{b}{8a}\cdot t - z.\end{equation}
This proves the first statement of the lemma. 

\noindent Now fix $z > 0$ such that an infection path $\upgamma$ satisfying \eqref{ex:inf_path}  exists with positive probability. Conditioned on this, by the FKG inequality, there is a positive probability that $\zeta^0_t(0) = 1$ for all $t \in [0,8az/b]$. We can then construct an infection path $\tilde \upgamma$ such that $\tilde\upgamma(t) \geq 0$ for all $t \in [0,8az/b]$ and $\tilde \upgamma(t) = \upgamma(t)$ for all $t \geq 8az/b$. By choosing $c_\Z$ small enough (depending only on $a$ and $b$), we then have $\tilde \upgamma(t) \geq \lfloor c_\Z \cdot z\rfloor$ for all $t \geq 0$. This proves (ii).

\noindent Finally, the left-hand side of \eqref{eq:bound_r} is less than
$$\P\Ll[\zeta^0_t \neq \underline{0},\; (0,0) \nleftrightarrow \infty\Rr] + \P\Ll[\max_{0\leq s \leq t}\; r_s < \frac{c_\Z \cdot t}{2} \mid (0,0) \leftrightarrow \infty\Rr].$$
Theorem 2.30 in \cite{lig2} implies that the first term is bounded by $e^{-ct}$ for some $c >0$. 
To bound the  second term, we use Part (i) with $z = c_\Z t/2$. This completes the proof.
\end{proof}

\noindent We are now in position to prove \eqref{eq:connect_segment} and \eqref{eq:2path_line}. 

\begin{proofof}{\eqref{eq:connect_segment}:} The statement follows directly from Part (ii) of the above lemma by taking any $c_\text{line} \leq c_\Z$. \end{proofof}

\begin{proofof}{\eqref{eq:2path_line}:} 
We start observing that
\begin{equation*}\label{eq:cross_paths}
\text{if } (x,0) \stackrel{\{0,\ldots,n\}}{\leftrightarrow} \{0\} \times [0,t] \text{ and }(x,0) \stackrel{\{0,\ldots,n\}}{\leftrightarrow} \{n\} \times [0,t], \text{ then } \xi^x_t = \xi^{\underline{1}}_t.
\end{equation*}
Thus, 
\begin{align}
\label{eq:two_bounds}
\P\Ll[ \xi^x_{t} \neq \xi^{\underline{1}}_t,\; \xi^x_{t} \neq \underline{0}\Rr] \leq \P\Ll[\xi^x_t \neq \underline{0},\; \xi^x_s(n) = 0 \;\forall s \leq t\Rr] +  \P\Ll[\xi^x_t \neq \underline{0},\; \xi^x_s(0) = 0 \;\forall s \leq t\Rr].
\end{align}
We now note that
\begin{align*}
\text{if } \xi^x_t \neq \underline{0}\text{ and }\xi^x_s(n) = 0\;\forall s \leq t, \text{ then } \max\{y:\xi^x_t(y) = 1\} = \max\{y:\zeta^{x}_t(y) = 1\}.
\end{align*}
Hence,
\begin{align*}
\P\Ll[\xi^x_t \neq \underline{0},\; \xi^x_s(n) = 0 \; \forall s \leq t\Rr] &\leq \P\Ll[\zeta^x_t \neq \underline{0},\; \zeta^x_s(n) = 0\; \forall s \leq t\Rr]\\
&\leq \P\Ll[\zeta^0_t \neq \underline{0},\; \zeta^0_s(n) = 0\; \forall s \leq t\Rr]\leq \P\Ll[\zeta^0_t \neq \underline{0},\; \max_{0\leq s\leq t} r_s < n\Rr].
\end{align*}
By \eqref{eq:bound_r} and the assumption that $t \geq 2n/c_\Z$, this is less than $e^{-n}$. The same bound holds for the second term in \eqref{eq:two_bounds} by symmetry. Thus 
\begin{align*}
\P\Ll[\xi^x_t \neq \xi^{\underline{1}}_t,\;\xi^x_t \neq \underline{0}\Rr] < 2e^{-n}, 
\end{align*}
and \eqref{eq:2path_line} follows by a union bound.
\end{proofof}

\subsection{Proof of Lemma \ref{lem:infest_star}}
The result is a straightforward adaption of Lemma 3.1 in \cite{mvy13}. That lemma implies that there exists $c > 0$ such that the following holds ($o$ denotes the central vertex of the star and $\ell$ denotes Lebesgue measure on $[0,\infty)$):
\begin{equation}\label{eq:helpful_star}
\P\Ll[|\xi^A_1| \geq \frac{n}{40},\;\ell\{s \leq 1: \xi^A_s(o) = 1\} > \frac34 \Rr] > 1-e^{-cn} \text{ for all } n,\; A \subset S_n \text{ with } |A| \geq \frac{n}{40}.
\end{equation}
(The mentioned lemma is stated with the assumption that $\lambda > 1$, but the proof works equally well here). This already implies \eqref{eq:exp_star}. 

\noindent Moreover, by a straightforward computation, it can be shown that
$$\P\Ll[|\xi^o_1| \geq \frac{n}{40}\Rr] \geq \P\Ll[D^o_{[0,1]} = \varnothing,\; |\{y \neq o: D^y_{[0,1]} = \varnothing,\; D^{o,y}_{[0,1]} \neq \varnothing\}| > \frac{n}{40}\Rr] > c',$$
for some constant $c'>0$ and any $n$. Hence, for any $n$ and any set $A$ with $A \neq \varnothing$,
$$\P\Ll[|\xi^A_2| > \frac{n}{40}\Rr] > c'',$$
for some smaller constant $c''>0$. Together with \eqref{eq:helpful_star} this proves \eqref{eq:star_survive}. 

\noindent Now we prove \eqref{eq:2path_star}. To this end it is convenient to introduce the dual process: for fixed $t$ and $x$, 
the dual process $(\hat \xi^{(x,t)}_s)_{0\leq s \leq t}$ is defined by
$$\hat \xi^{(x,t)}_s(y) = \mathds{1}\{(y,t-s) \leftrightarrow (x,t)\}.$$
Recall that 
\begin{equation}
\label{dual}
\left\{\xi_t^x\neq \underline 0,\, \xi_t^x\neq \xi_t^{\underline 1}\right\}= \left\{\xi_t^x\neq \underline 0\right\} \cap \left\{\exists w:\, \hat  \xi^{(w,t)}_t \neq \underline 0,\, \xi_s^x\cap \hat  \xi^{(w,t)}_{t-s}=\emptyset \, \forall s\le t \right\}. 
\end{equation}
Now, it follows from \eqref{eq:helpful_star} that, for any $n$, $t \geq n$ and any vertex $x$,
\begin{equation}\label{eq:dual_star}\P\Ll[\xi^x_t \neq \varnothing,\; \frac{1}{n}\ell\{s \leq n: \xi^x_s(o) = 1\} \le \frac12\Rr] < e^{-c'''n}.\end{equation}
Together with a union bound this implies that, with probability larger than $1 - 2ne^{-c'''n}$, the following event occurs:
$$\begin{aligned}&\bigcap_{x \in S} \left[\left( \left\{\xi^x_t = \varnothing\right\} \cup \left\{\ell\{s \leq n: \xi^x_s(o) = 1\} >\frac{n}{2}\right\}\right) \right.\\&\hspace{2cm} \left.\cap \left( \left\{\hat \xi^{(x,t)}_t = \varnothing\right\} \cup \left\{\ell\{s \leq n: \hat\xi^{(x,t)}_{t-s}(o) = 1\} >\frac{n}{2}\right\}\right)\right].\end{aligned}$$ 
This proves \eqref{eq:2path_star}, as one can observe that the intersection of the above event with the event on the right-hand side of \eqref{dual} is empty.

\end{document}